\begin{document}

\title{A Non-monotone Linear Search Method with Mixed Direction on Stiefel Manifold
}


\author{Harry Oviedo         \and
         Hugo Lara           \and
         Oscar Dalmau
}


\institute{Harry Fernando Oviedo Leon \at
              Mathematics Research Center, CIMAT A.C. Guanajuato, Mexico\\
              Tel.: +524737561760\\
              \email{harry.oviedo@cimat.mx}           
           \and
           Hugo Jos\'e Lara Urdaneta \at
           Universidade Federal de Santa Catarina. Campus Blumenau. Brasil\\
           Tel.: +5547932325145\\
           \email{hugo.lara.urdaneta@ufsc.br}
           \and
           Oscar Susano Dalmau Cede\~{n}o  \at
           Mathematics Research Center, CIMAT A.C. Guanajuato, Mexico \\
           Tel.: +524731201319\\
           \email{dalmau@cimat.mx}
}

\date{Received: date / Accepted: date}

\maketitle

\begin{abstract}
In this paper, we propose a non-monotone line search method for solving optimization problems on Stiefel manifold. Our method uses as a search direction a mixed gradient based on a descent direction, and a Barzilai-Borwein line search. Feasibility is guaranteed by projecting each iterate on the Stiefel manifold, through SVD factorizations. Some theoretical results for analyzing the algorithm are presented. Finally, we provide numerical experiments comparing our algorithm with other state-of-the-art procedures.
\keywords{optimization on manifolds \and Stiefel
manifold \and non-monotone algorithm \and linear search methods.}
\end{abstract}

\section{Introduction}
\label{sec:1}
In this paper we consider optimization in matrices with orthogonality constraints,
\begin{equation}
  \min_{X\in \mathbb{R}^{n\times p}} \mathcal{F}(X) \quad s.t. \quad X^{\top}X = I  \label{problem}
\end{equation}
where $\mathcal{F}:\mathbb{R}^{n\times p}\rightarrow \mathbb{R}$ is a differentiable function and $I\in \mathbb{R}^{p\times p}$ is the identity matrix.
The feasible set  $ St(n,p) :=\{ X\in \mathbb{R}^{n\times p} / X^{\top}X=I \} $ is known in the literature as
the ``Stiefel Manifold'' which is reduced to the unit sphere when $p=1$ and in the case $p=n$ it is known as
``Orthogonal group'' (see \cite{Absil}). It is known that the dimension of the Stiefel manifold is $np- \frac{1}{2}p(p+1)$ \cite{Absil}, and it can be seen as an
embedded sub-manifold of $\mathbb{R}^{n\times p}$.  Problem (\ref{problem})  encompasses many applications such as nearest low-rank correlation matrix problem \cite{Grubi,Pietersz,Rebonato}, linear eigenvalue problem \cite{Golub,Saad}, Kohn-Sham total energy minimization \cite{YangMeza}, orthogonal procrustes problem \cite{Elden,Schonemann}, weighted orthogonal procrustes problem \cite{Francis}, sparse principal component analysis \cite{Ghaoui,Journee,Zou}, joint diagonalization~(blind source separation) \cite{Joho,Theis}, among others. In addition, many problems such as PCA, LDA, multidimensional scaling, orthogonal neighborhood preserving projection can be formulated as problem  (\ref{problem}) \cite{Kokiopoulou}.
On the other hand, it is known that  Stiefel manifold is a compact set which guarantees a global optimum, nevertheless it's not a convex set which  makes problem-solving (\ref{problem}) very hard. In particular, the \emph{quadratic assignment problem} (QAP), which can be formulated as minimization over a permutation matrix in $\mathbb{R}^{n\times n}$, that is, $X^{\top}X = I$, $X\geq 0$ and ``leakage interference minimization'' \cite{LiuDai} are NP-hard.\\

The problem (\ref{problem}) can be treated as a regular (vector) optimization problem, which conduces to several difficulties because orthogonal constraints can lead to many local minimizers. On the other hand, generating sequences of feasible points is not easy because preserving orthogonality constraints is numerically expensive. Most existing algorithms that generate feasible points either use routines to matrix reorthogonalization or generate points along geodesics on the manifold. The first one requires singular value decomposition or QR-decomposition, and the second one computes the matrix exponential or solves partial differential equations. We shall use the first approach. Some effort to avoid these approaches have been done,  calculating inverse matrices instead of SVD's at each step \cite{LaiOsher,WenYang,WenYin}.\\

By exploiting properties of the Stiefel Manifold, we propose a non-monotone line search constraint preserving algorithm to solve problem (\ref{problem}) with a mixed gradient based search direction. We analyze the resulting algorithm, and compare it to other state-of-the-art procedures, obtaining promising numerical results.

\subsection{Notation}
\label{subsec:1}
We say that a matrix $W\in \mathbb{R}^{n\times n}$ is skew-symmetric if $W = -W^{\top}$. The trace of $X$ is defined as the sum of the diagonal elements which we will denote by $Tr[X]$. The Euclidean inner product of two matrices $A,B\in \mathbb{R}^{m\times n}$ is  defined
as $\langle A, B\rangle : = \sum_{i,j}a_{i,j}b_{i,j} = Tr[A^{\top}B]$. The Frobenius norm is defined by the above inner product,
that is $||A||_F = \sqrt{\langle A,A\rangle}$. Let $\mathcal{F}:\mathbb{R}^{n\times p} \rightarrow \mathbb{R}$ be a differentiable function, we denote by $G := \mathcal{DF}(X):=( \frac{\partial \mathcal{F}(X)}{\partial X_{ij}} )$ the matrix of partial derivatives of $\mathcal{F}$. When evaluated at the current iterate $X_k$ of the algorithm, we denote the gradient by $G_k = \mathcal{DF}(X_k)$. Other objects depending on $X_k$ are also denoted with subindex $k$. Additionally, the directional derivative of $\mathcal{F}$ along a given matrix $Z$ at a given point $X$ is defined by:
\begin{eqnarray}
  \mathcal{DF}(X)[Z] := \lim_{t\rightarrow 0}\frac{\mathcal{F}(X+tZ) - \mathcal{F}(X)}{t} = \langle \mathcal{DF}(X),Z \rangle. \label{DerivDireccional}
\end{eqnarray}

\subsection{Organization}
\label{subsec:2}
The rest of the paper is organized as follows. In section 2 we propose and analyze our mixed constraint preserving updating scheme. Subsubsection 2.1 establishes the optimality conditions, while subsection 2.2 is devoted to the projection operator on the Stiefel Manifold. The proposed updating scheme is introduced and analyzed in subsection 2.3. Two algorithms are presented in section 3. The first one uses Armijo line search, and the second one Barzilai-Borwein stepsize scheme. Finally, numerical experiments are carried out for comparing our algorithm with other state-of-the-art procedures by solving instances of Weighted Orthogonal Procrustes Problem (WOPP), Total Energy minimization, and Linear Eigenvalue problems, are presented in section 4.

\section{A mixed constraint preserving updating scheme}
\label{sec:2}
\subsection{Optimality conditions}
\label{subsec:2}


The classical constrained optimization theory for continuous functions in $\mathbb{R}^n$ involve finding minimizers of the Lagrangean function applied to problem (\ref{problem}), given by:
\begin{eqnarray}
  \mathcal{L}(X,\Lambda)= \mathcal{F}(X) - \frac{1}{2} Tr[\,\Lambda^{\top}(X^{\top}X-I)\,], \label{funlagrange} \nonumber
\end{eqnarray}
where $\Lambda$ is the Lagrange multipliers matrix, which is a symmetric matrix because the matrix  $X^{\top}X$ is also symmetric.
The Lagrangean function leads to the first order optimality conditions for problem (\ref{problem}):

    \begin{equation}
    G - X\Lambda     =  0, \label{KKT1}
    \end{equation}
    \begin{equation}
    X^{\top}X - I   =  0. \label{KKT2}
    \end{equation}

By differentiating both sides of (\ref{KKT2}), we obtain the tangent space of Stiefel manifold at $X$:
$$
T_XSt(n,p) := \{ Z\in\mathbb{R}^{n\times p} : X^{\top}Z + Z^{\top}X = 0 \}.
$$

The following technical result, demonstrated by Wen-Yin in \cite{WenYin}, provides a tool to calculate roots for Eqs.~(\ref{KKT1})-(\ref{KKT2}).

\begin{lemma}\label{lema1}
   Suppose that  $X$ is a local minimizer of problem (\ref{problem}). Then $X$ satisfies the first order optimality
   conditions with the associated Lagrangian multiplier  $\Lambda = G^{\top}X$. Define
   \[
   \nabla \mathcal{F}_1(X) :=G-XG^{\top}X \quad and \quad A:=GX^{\top}-XG^{\top}
   \]

then $\nabla \mathcal{F}_1(X) = AX$. Moreover, $\nabla \mathcal{F}_1 = 0$ if and only if  $A=0$.
\end{lemma}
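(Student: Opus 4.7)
The plan is to split the statement into three essentially independent pieces: (a) the derivation of the formula $\Lambda = G^{\top}X$ from the KKT conditions, (b) the algebraic identity $\nabla\mathcal{F}_1(X) = AX$, which only uses feasibility $X^{\top}X = I$, and (c) the equivalence $\nabla\mathcal{F}_1 = 0 \Leftrightarrow A = 0$, which again uses only feasibility.

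For part (a), I would start from the KKT conditions (\ref{KKT1})–(\ref{KKT2}). Left-multiplying (\ref{KKT1}) by $X^{\top}$ and substituting (\ref{KKT2}) yields $\Lambda = X^{\top}G$. Since the constraint $X^{\top}X - I = 0$ is a symmetric matrix equation, the associated multiplier $\Lambda$ must be symmetric (this is already noted in the text just before the Lagrangian), so taking transposes gives the equivalent form $\Lambda = G^{\top}X$. Part (b) is a direct computation: using $X^{\top}X = I$,
\begin{equation*}
AX = (GX^{\top} - XG^{\top})X = G(X^{\top}X) - XG^{\top}X = G - XG^{\top}X = \nabla\mathcal{F}_1(X).
\end{equation*}

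For part (c), the implication $A = 0 \Rightarrow \nabla\mathcal{F}_1(X) = 0$ is immediate from the identity $\nabla\mathcal{F}_1(X) = AX$. The nontrivial direction is the converse, which I expect to be the main obstacle, since a priori $AX = 0$ is strictly weaker than $A = 0$ whenever $p < n$. The key observation to unlock it is that the particular form $A = GX^{\top} - XG^{\top}$ forces any $X$ in the kernel of $A$ to carry enough information about $G$. Concretely, assume $G = XG^{\top}X$. Left-multiplying by $X^{\top}$ and using $X^{\top}X = I$ gives $X^{\top}G = G^{\top}X$, so $X^{\top}G$ is symmetric. Next, rewrite $G = X(G^{\top}X)$ to conclude that every column of $G$ lies in the column space of $X$; equivalently, $G = XX^{\top}G$.

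These two facts together finish the argument: transposing $G = XX^{\top}G$ and pre-multiplying by $X$ gives $XG^{\top} = XG^{\top}XX^{\top}$, while $GX^{\top} = XX^{\top}GX^{\top} = X(X^{\top}G)X^{\top}$. Using the symmetry $X^{\top}G = G^{\top}X$ established above,
\begin{equation*}
GX^{\top} = X(G^{\top}X)X^{\top} = (XG^{\top})(XX^{\top}) = XG^{\top},
\end{equation*}
so $A = GX^{\top} - XG^{\top} = 0$. The overall strategy is therefore standard (KKT for the multiplier, algebra for the identity), and the only real work is the bootstrapping argument that upgrades $AX = 0$ to $A = 0$ via the column-space inclusion $\operatorname{Range}(G) \subseteq \operatorname{Range}(X)$.
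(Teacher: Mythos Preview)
The paper does not actually prove this lemma; it only states it and attributes the proof to Wen--Yin \cite{WenYin}. So there is no in-paper argument to compare against.

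That said, your proposal is correct and self-contained. Part (a) is the standard KKT manipulation, and part (b) is the obvious one-line check. For part (c), the nontrivial implication $\nabla\mathcal{F}_1(X)=0\Rightarrow A=0$ is where the content lies, and your two-step bootstrap---first extracting the symmetry $X^{\top}G=G^{\top}X$, then the range inclusion $G=XX^{\top}G$ from $G=X(G^{\top}X)$---is exactly the right idea. The final chain
\[
GX^{\top}=XX^{\top}GX^{\top}=X(X^{\top}G)X^{\top}=X(G^{\top}X)X^{\top}=(XG^{\top})(XX^{\top})=XG^{\top}
\]
(using, for the last equality, the transpose of $G=XX^{\top}G$) is clean. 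This matches the standard argument one finds in the Wen--Yin reference, so your approach is essentially the intended one even though the present paper omits the details.
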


The lemma \ref{lema1} establishes an equivalence to the Eq.~(\ref{KKT1}) condition, i.e. $X\in\mathbb{R}^{n\times p}$ satisfies $\nabla \mathcal{F}_1(X)=0$ if and only if $X$ satisfies equation (\ref{KKT1}), so we can use this result as a stopping criterion for our algorithm.\\

\subsection{Tools on Stiefel Manifold}
\label{subsec:3}
The following lemma establishes an important property for matrices on Stiefel Manifold.
\begin{lemma}\label{lema_normfro}
  If $X\in St(n,p)$ then $||X^{\top}M||_F^{2}\leq||M||_F^{2}$ for all $M\in\mathbb{R}^{n\times p}$ matrix.
\end{lemma}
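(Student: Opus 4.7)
The plan is to rewrite the squared Frobenius norm as a trace and then exploit the fact that $XX^\top$ is an orthogonal projector whenever $X\in St(n,p)$.

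First I would use the definition of the Frobenius norm in terms of the trace inner product to obtain
\[
\|X^\top M\|_F^2 \;=\; Tr\bigl[(X^\top M)^\top (X^\top M)\bigr] \;=\; Tr\bigl[M^\top (XX^\top) M\bigr].
\]
The key observation is then that the $n\times n$ matrix $P:=XX^\top$ is an orthogonal projector of rank $p$: since $X^\top X = I_p$ we have $P^\top = P$ and
\[
P^2 \;=\; XX^\top X X^\top \;=\; X I_p X^\top \;=\; P.
\]
Consequently $I_n - P$ is symmetric and idempotent as well, so it is positive semidefinite.

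Next I would use this to compare the two traces. Writing
\[
\|M\|_F^2 - \|X^\top M\|_F^2 \;=\; Tr[M^\top M] - Tr[M^\top P M] \;=\; Tr\bigl[M^\top (I_n - P) M\bigr],
\]
the right-hand side is the trace of a positive semidefinite matrix (it equals $\|(I_n-P)M\|_F^2$ after using idempotency of $I_n-P$), hence nonnegative. This yields the desired inequality $\|X^\top M\|_F^2 \leq \|M\|_F^2$.

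There is no real obstacle in this argument; the only subtlety is verifying that $XX^\top$ is a projector, which relies crucially on $X$ being an element of the Stiefel manifold rather than a general matrix. Alternatively, one can give a one-line proof via the thin SVD $X = U\begin{pmatrix}I_p\\ 0\end{pmatrix}V^\top$ with $U\in O(n)$, $V\in O(p)$, after which orthogonal invariance of $\|\cdot\|_F$ reduces the claim to the trivial fact that selecting the first $p$ rows of a matrix does not increase its Frobenius norm; I would mention this alternative only if a more geometric perspective is needed later.
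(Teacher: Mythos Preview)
Your argument is correct and considerably cleaner than the paper's. The paper proceeds by taking full singular value decompositions of \emph{both} $X^\top$ and $M$, writing $X^\top = U_X\Sigma_X V_X^\top$ and $M = U_M\Sigma_M V_M^\top$, and then reducing $\|X^\top M\|_F^2$ to $\|Q_p\Sigma_M\|_F^2$, where $Q_p$ consists of the first $p$ rows of the orthogonal matrix $Q = V_X^\top U_M$. The inequality then comes from the column-wise bound $\sum_{i=1}^p q_{ij}^2 \le \sum_{i=1}^n q_{ij}^2 = 1$. Your route bypasses all of this by observing that $P = XX^\top$ is an orthogonal projector and that $\|M\|_F^2 - \|X^\top M\|_F^2 = Tr[M^\top(I_n-P)M] = \|(I_n-P)M\|_F^2 \ge 0$. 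This is shorter, avoids introducing the SVD of $M$ entirely, and makes the geometric content (orthogonal projection never increases length) explicit. The alternative you sketch at the end---thin SVD of $X$ followed by orthogonal invariance of the Frobenius norm---is in spirit what the paper does, but the paper carries along the SVD of $M$ as well, which is unnecessary. Either of your arguments would be a strict improvement in exposition; the projector version is the most economical.
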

\begin{proof}
  In fact, let $X\in St(n,p)$ and $M\in\mathbb{R}^{n\times p}$, and let $X^{\top} = U_{X}\Sigma_{X}V_{X}^{\top}$, $M = U_{M}\Sigma_{M}V_{M}^{\top}$ be the singular value decomposition  of $X^{\top}$ and $M$ respectively, where $\Sigma_{X} = diag(\sigma_1,\sigma_2,\ldots,\sigma_p) \in \mathbb{R}^{p\times n}$ and $\Sigma_{M} = diag(\sigma_1^{M},\sigma_2^{M},\ldots,\sigma_p^{M}) \in \mathbb{R}^{n\times p}$ are diagonal matrices. We denote by
$Q = (q_{ij}) = V_{X}^{\top}U_{M}$. Since $Q\in\mathbb{R}^{n\times n}$ is a orthogonal matrix, i.e., $Q^{\top}Q = I = QQ^{\top}$ then,

\begin{equation}
  \sum_{i=1}^{n} q_{ij}^{2} = 1, \quad \forall j= 1,\ldots,n.   \label{prop2_eq3}
\end{equation}

We denote by $Q_p = Q(1:p,:)\in\mathbb{R}^{p\times n}$ the matrix obtained from $Q$ by extracting the last $n-p$ rows, consequently, $Q_pQ_p^{\top} = I$. Now, we have,

\begin{eqnarray}
  ||X^{\top}M||_F^{2} & = & ||U_{X}\Sigma_{X}V_{X}^{\top}U_{M}\Sigma_{M}V_{M}^{\top}||_F^{2} \label{prop2_eq4} \nonumber \\
                          & = & ||\Sigma_{X}Q\Sigma_{M}||_F^{2} \label{prop2_eq5} \\
                          & = & ||Q_p\Sigma_{M}||_F^{2} \label{prop2_eq6} \\
                          & = & \sum_{j=1}^{p}\Big[(\sigma_j^{M})^2(\sum_{i=1}^{p}q_{ij}^2)\Big] \label{prop2_eq7} \nonumber \\
                          & \leq & \sum_{j=1}^{p}(\sigma_j^{M})^2(\sum_{i=1}^{n}q_{ij}^2) \label{prop2_eq8} \\
                          & = & \sum_{j=1}^{p}(\sigma_j^{M})^2 \label{prop2_eq9} \\
                          & = & ||M||_F^{2}. \label{prop2_eq10} \nonumber
\end{eqnarray}
The second line of (\ref{prop2_eq5}) is obtained by using the fact that the Frobenius norm is invariant under orthogonal transformations, the third line (\ref{prop2_eq6}) is obtained by using that $\sigma_j = 1,\, \forall j\in\{1,\ldots,p\}$ because $X^{\top}X = I$  and the sixth line  (\ref{prop2_eq9}) is followed by using  (\ref{prop2_eq3}) in (\ref{prop2_eq8}). Thus, we conclude that

$$||X^{\top}M||_F^{2} \leq ||M||_F^{2}, \quad \forall M\in\mathbb{R}^{n\times p}.$$

\end{proof}

Another tool we shall employ is the projector operator on Stiefel manifold. First we define it and then provide a characterization, shown in \cite{Manton02}, in terms of the singular value decomposition of the underlined matrix.

\begin{definition}
Let $X\in \mathbb{R}^{n\times p}$ be a rank $p$ matrix. The projection operator $\pi:\mathbb{R}^{n\times p} \rightarrow St(n,p)$
is defined to be
\begin{equation}
  \pi(X) = \textrm{arg} \min_{Q\in St(n,p)} ||X - Q||_F^{2}. \label{projection}
\end{equation}
\end{definition}

The following proposition, demonstrated in \cite{Manton02}, provides us an explicit expression of the projection operator $\pi(\cdot)$.
\begin{proposition}
  Let $X\in \mathbb{R}^{n\times p}$ be a rank $p$ matrix. Then, $\pi(X)$
is well defined. Moreover, if the SVD of $X$ is $X = U\Sigma V^{\top}$, then $\pi(X) = UI_{n,p}V^{\top}$.
\end{proposition}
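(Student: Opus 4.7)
The plan is to reduce the constrained minimization to a tractable maximization over the Stiefel manifold, then exploit the SVD of $X$ to diagonalize the problem.

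First, I would expand the Frobenius norm. For any $Q\in St(n,p)$ one has $\|Q\|_F^2 = Tr[Q^{\top}Q] = Tr[I_p] = p$, so
\[
\|X-Q\|_F^2 = \|X\|_F^2 - 2\langle X,Q\rangle + p.
\]
Hence minimizing $\|X-Q\|_F^2$ over $St(n,p)$ is equivalent to \emph{maximizing} $\langle X,Q\rangle = Tr[X^{\top}Q]$ over the same set. This reformulation is the key simplification: the objective becomes linear in $Q$, and the compactness of $St(n,p)$ immediately guarantees existence of at least one maximizer.

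Next, I would substitute the SVD $X = U\Sigma V^{\top}$ and perform a change of variables $W := U^{\top} Q V$. A direct calculation shows $W^{\top}W = V^{\top}Q^{\top}UU^{\top}QV = V^{\top}V = I_p$, so $W \in St(n,p)$, and conversely any such $W$ yields a feasible $Q = U W V^{\top}$. Under this bijection,
\[
Tr[X^{\top}Q] = Tr[V\Sigma^{\top}U^{\top}Q] = Tr[\Sigma^{\top}W] = \sum_{i=1}^{p}\sigma_i W_{ii},
\]
since $\Sigma$ is diagonal with entries $\sigma_1,\ldots,\sigma_p$ in its top block.

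Now the problem is to maximize $\sum_{i=1}^{p}\sigma_i W_{ii}$ over $W\in St(n,p)$. Since the columns of $W$ are unit vectors in $\mathbb{R}^n$, each diagonal entry satisfies $|W_{ii}|\leq 1$, so $\sum_i \sigma_i W_{ii} \leq \sum_i \sigma_i$. Equality is attained by $W = I_{n,p}$, so the maximum value is realized and $Q = U I_{n,p} V^{\top}$ is a minimizer; this shows $\pi(X)$ is nonempty.

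For well-definedness (i.e.\ uniqueness), the rank-$p$ hypothesis is crucial: it forces $\sigma_i > 0$ for every $i=1,\ldots,p$. Therefore equality in $\sigma_i W_{ii} \leq \sigma_i$ forces $W_{ii} = 1$, which in turn forces the remaining entries of the $i$-th column of $W$ to vanish (since the column has unit norm). Hence $W = I_{n,p}$ is the unique maximizer in the transformed variable, and the corresponding $Q = U I_{n,p} V^{\top}$ is the unique minimizer. The main subtlety I anticipate is handling repeated singular values: although the factors $U,V$ are then not unique, the minimizing matrix $Q$ itself still is, because uniqueness is argued at the level of $W$ in any fixed SVD, and the resulting product $UI_{n,p}V^{\top}$ is invariant under the residual orthogonal ambiguity in $U,V$.
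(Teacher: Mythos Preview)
Your argument is correct and is in fact the standard one: reduce to maximizing the linear functional $Tr[X^{\top}Q]$, diagonalize via the SVD bijection $Q\leftrightarrow W=U^{\top}QV$, bound each diagonal entry of $W$ by the unit-column constraint, and use $\sigma_i>0$ to force $W=I_{n,p}$ uniquely. Your remark on repeated singular values is also sound: uniqueness of the minimizer $Q$ is established independently of the SVD chosen, so the formula $UI_{n,p}V^{\top}$ necessarily returns the same matrix for every SVD of $X$.

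As for comparison: the paper does not supply its own proof of this proposition but simply attributes it to Manton~\cite{Manton02}. Your write-up is essentially the classical derivation (and, to my knowledge, the one Manton gives), so there is nothing substantively different to contrast.
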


\subsection{Update scheme}
\label{sec:3}

In \cite{Manton02} it is presented an algorithm which adapts the well known steepest descent algorithm to solve problem (\ref{problem}). The ingredients of the updating formula are the derivative of the Lagrangean function, the projection operator (\ref{projection}) and the step size choice by means of Armijo. The direction in (\ref{direction}) is in the tangent space of the manifold at the current point.
With the intuition of perturbing the steepest descent direction, we propose a modification of Manton's procedure by a mixture of tangent directions which incorporates $\nabla \mathcal{F}_2=(I-XX^{\top})G$. Note that  if $X^*$ is a local minimizer of the problem (\ref{problem}) then $\nabla \mathcal{F}_1(X^*)=\nabla \mathcal{F}_2(X^*)$. Since both of the directions belong to $T_XSt(n,p)$, then the obtained mixture is also in this space. This mean that the obtained algorithm preserve the matrix structure of the problem, instead of using optimization in $\mathbb{R}^n$.\\

In this paper we focus on a modification of the projected gradient-type methods: given a feasible point $X$, we compute the new iteration $Z(\bar\tau)$ as a point on the curve:
\begin{eqnarray}
 Z(\tau)   & = & \pi(X - \tau H), \label{update_formulae}
\end{eqnarray}
where the term $\tau>0$ represents the step size. The direction $H$ is defined by:
\begin{equation}
H := \alpha\nabla \mathcal{F}_1(X) + \beta \nabla \mathcal{F}_2(X),  \label{direction}
\end{equation}
where $\alpha>0$, $\beta\geq0$, $\nabla \mathcal{F}_1(X)$ is defined as in lemma \ref{lema1} and $\nabla \mathcal{F}_2(X)$ is given by:
\begin{eqnarray}
\nabla \mathcal{F}_2(X) &:=& (I - XX^{\top})G.  \nonumber
\end{eqnarray}

Note that $Z(0)=X$, and the new trial point $Z(\bar\tau)$ satisfies the orthogonality constraints of the problem (\ref{problem}) because  $Z(\tau)$ is a curve on the manifold. Proposition \ref{prop1} below shows that direction $H$ belongs to the tangent space of the manifold at the point $X$.

\begin{proposition}\label{prop1}
The direction matrix defined in (\ref{direction}) belongs to the tangent space of $St(n,p)$ at $X$.
\end{proposition}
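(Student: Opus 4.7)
The plan is to verify directly that $H$ satisfies the defining equation of the tangent space, namely $X^{\top}H + H^{\top}X = 0$. Since $H$ is a linear combination $H = \alpha \nabla\mathcal{F}_1(X) + \beta \nabla\mathcal{F}_2(X)$ and the tangent space is a linear subspace, it suffices to check that each of $\nabla\mathcal{F}_1(X)$ and $\nabla\mathcal{F}_2(X)$ individually lies in $T_X St(n,p)$.

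For $\nabla\mathcal{F}_1(X) = G - XG^{\top}X$, I would compute $X^{\top}\nabla\mathcal{F}_1(X)$ and use the feasibility condition $X^{\top}X = I$ to simplify $X^{\top}XG^{\top}X$ to $G^{\top}X$, yielding $X^{\top}\nabla\mathcal{F}_1(X) = X^{\top}G - G^{\top}X$. Since this expression is of the form $B - B^{\top}$ with $B = X^{\top}G$, it is skew-symmetric, and hence $X^{\top}\nabla\mathcal{F}_1(X) + \nabla\mathcal{F}_1(X)^{\top}X = 0$.

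For $\nabla\mathcal{F}_2(X) = (I - XX^{\top})G$, the same idea is even more immediate: using $X^{\top}X = I$, I would expand $X^{\top}(I - XX^{\top})G = X^{\top}G - X^{\top}G = 0$, and taking transpose yields $\nabla\mathcal{F}_2(X)^{\top}X = 0$ as well, so trivially $X^{\top}\nabla\mathcal{F}_2(X) + \nabla\mathcal{F}_2(X)^{\top}X = 0$.

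Combining these two verifications by linearity gives $X^{\top}H + H^{\top}X = \alpha \cdot 0 + \beta \cdot 0 = 0$, which is precisely the condition characterizing membership in $T_X St(n,p)$. There is no real obstacle here: the proof is a direct algebraic check whose only nontrivial ingredient is the feasibility relation $X^{\top}X = I$; the mildly interesting point worth flagging in the write-up is that the term $X^{\top}G - G^{\top}X$ arising from $\nabla\mathcal{F}_1$ is automatically skew-symmetric, which is exactly what the tangent-space condition requires.
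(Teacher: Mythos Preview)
Your proposal is correct and follows essentially the same route as the paper: verify the tangent-space condition $X^{\top}W + W^{\top}X = 0$ for each of $\nabla\mathcal{F}_1(X)$ and $\nabla\mathcal{F}_2(X)$ using $X^{\top}X = I$, then combine by linearity. The only cosmetic difference is that for $\nabla\mathcal{F}_1(X)$ the paper invokes the factorization $\nabla\mathcal{F}_1(X) = AX$ with $A = GX^{\top} - XG^{\top}$ skew-symmetric (from Lemma~\ref{lema1}) rather than expanding $G - XG^{\top}X$ directly, but the computations are equivalent.
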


\begin{proof}
We must prove that: $H\in T_{X}St(n,p)$.
For this, we prove that: $W_1 := \nabla \mathcal{F}_1(X) \in T_{X}St(n,p)$ and $W_2 := \nabla \mathcal{F}_2(X)\in T_{X}St(n,p)$. In fact, by using $A^{\top}=-A$, we have,
\begin{eqnarray}
W_1^{\top}X + X^{\top}W_1 & = & X^{\top}A^{\top}X + X^{\top}AX  \nonumber \\
                              & = & 0, \nonumber
\end{eqnarray}
and due to the feasibility of $X$ ($X^{\top}X=I$) we obtain
\begin{eqnarray}
W_2^{\top}X + X^{\top}W_2 & = & (\, G^{\top}X - G^{\top}X X^{\top}X \,) + (\, X^{\top}G - X^{\top}X X^{\top}G \,) \nonumber \\
                              & = & 0. \nonumber
\end{eqnarray}
Consequently $W_1$ and $W_2$ belongs to $T_XSt(n,p)$. Since $T_{X}St(n,p)$ is a vector space, we have the linear combination of $W_1$, and $W_2$ also belongs to $T_{X}St(n,p)$, concluding that $H\in T_{X}St(n,p)$.
\end{proof}

The following proposition shows that the curve $Z(\tau)$ defines a descent direction for $\alpha>0$. $\alpha$ values next to zero provide bad mixed directions.

\begin{proposition}\label{prop2}
If $\alpha>0$ and $\beta \ge 0$ then $Z(\tau)$ is a descent curve at $\tau=0$, that is
\begin{equation}
\mathcal{DF}(X)[\dot{Z}(0)] = \frac{\partial \mathcal{F}(X+\tau \dot{Z}(0))}{\partial \tau} \Big|_{\tau = 0} \leq  - \frac{\alpha}{2}||A||_F^{2}  < 0. \nonumber
\end{equation}
\end{proposition}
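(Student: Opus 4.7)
The plan is to first express $\dot{Z}(0)$ in closed form and then evaluate $\mathcal{DF}(X)[\dot{Z}(0)]=\langle G,\dot{Z}(0)\rangle$ by splitting $H$ into its two components and estimating each one separately, the first via a direct algebraic manipulation and the second via Lemma~\ref{lema_normfro}.

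The first step is to show that $\dot{Z}(0)=-H$. Since $\pi$ is the metric projection onto $St(n,p)$ and $\pi(X)=X$ whenever $X\in St(n,p)$, the map $\eta\mapsto \pi(X+\eta)$ is a first-order retraction at $X$: its differential at $\eta=0$ restricted to $T_{X}St(n,p)$ is the identity. By Proposition~\ref{prop1}, $H\in T_{X}St(n,p)$, hence applying the retraction property to $\eta=-\tau H$ and differentiating at $\tau=0$ gives $\dot{Z}(0)=-H$. This is the key technical ingredient of the proof, and I expect it to be the main obstacle; if one does not wish to invoke retraction theory abstractly, it can be verified directly by differentiating the SVD-based formula $\pi(Y)=UI_{n,p}V^{\top}$ along the curve $Y(\tau)=X-\tau H$.

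The second step uses the chain rule together with (\ref{DerivDireccional}) to write
\[
\mathcal{DF}(X)[\dot{Z}(0)] = -\langle G,H\rangle = -\alpha\langle G,\nabla\mathcal{F}_1(X)\rangle - \beta\langle G,\nabla\mathcal{F}_2(X)\rangle.
\]
For the first inner product I would use the identity $\nabla\mathcal{F}_1(X)=AX$ from Lemma~\ref{lema1}, expand $\langle G,AX\rangle = Tr[G^{\top}AX]$ by substituting $A=GX^{\top}-XG^{\top}$, and simplify using $X^{\top}X=I$. A parallel expansion of $\|A\|_F^2=-Tr[A^2]$ (valid since $A^{\top}=-A$) reveals that the two expressions share the same ``cross term'' $Tr[G^{\top}XG^{\top}X]$, which yields the clean identity $\langle G,\nabla\mathcal{F}_1(X)\rangle = \tfrac{1}{2}\|A\|_F^{2}$.

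The third step handles the $\nabla\mathcal{F}_2$ contribution. A short computation gives $\langle G,(I-XX^{\top})G\rangle = \|G\|_F^{2}-\|X^{\top}G\|_F^{2}$, which is nonnegative by Lemma~\ref{lema_normfro}. Since $\alpha>0$ and $\beta\ge 0$, combining the two estimates yields
\[
\mathcal{DF}(X)[\dot{Z}(0)] \le -\frac{\alpha}{2}\|A\|_F^{2},
\]
and strict negativity follows whenever $X$ is not a stationary point, i.e., $A\neq 0$ (equivalently $\nabla\mathcal{F}_1(X)\neq 0$ by Lemma~\ref{lema1}), which is the relevant case for the algorithm.
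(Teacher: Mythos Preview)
Your proposal is correct and follows essentially the same route as the paper: establish $\dot{Z}(0)=-H$, split $\langle G,H\rangle$ into the $\nabla\mathcal{F}_1$ and $\nabla\mathcal{F}_2$ parts, reduce the first to $\tfrac{1}{2}\|A\|_F^2$ by the same trace manipulation, and bound the second using Lemma~\ref{lema_normfro}. The only cosmetic difference is that the paper obtains $\dot{Z}(0)=-H$ from Manton's explicit second-order expansion $\pi(X+\tau U)=X+\tau U-\tfrac{\tau^2}{2}XU^{\top}U+O(\tau^3)$ rather than from abstract retraction properties.
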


\begin{proof}
We begin by calculating the derivative of the curve $Z(\tau)$ at $\tau = 0$. From Taylor's second order approximation in Stiefel manifold (prop. 12 in \cite{Manton02}), if $X\in St(n,p)$ and $U\in T_{X}St(n,p)$ then:
\begin{equation}
\pi(X+\tau U) = X+\tau U - \frac{\tau^{2}}{2}XU^{\top}U + O(\tau^{3}), \label{prop12}
\end{equation}
so, deriving (\ref{prop12}) with respect to $\tau$, and evaluating at $\tau=0$,
\[ \frac{\partial \pi(X+\tau U)}{\partial \tau}(0)= U.\]
It follows from this fact, and our update formula $Z(\tau)$ that,
\[ \dot{Z}(0) = -H. \]
Now, from the definition (\ref{DerivDireccional}) and using trace properties we have,
\begin{eqnarray}
\mathcal{DF}(X)[\dot{Z}(0)] &=& -Tr[G^{\top}H] \nonumber \\
&=& - \alpha Tr[G^{\top}A X] - \beta Tr[G^{\top}(G - XX^{\top}G)] \nonumber \\
& = &  - \frac{\alpha}{2}||A||_F^{2} - \beta||G||_F^{2} + \beta Tr([(X^{\top}G)^{\top}X^{\top}G] \nonumber \\
& = & - \frac{\alpha}{2}||A||_F^{2} - \beta||G||_F^{2} + \beta ||X^{\top}G||_F^{2}, \label{prop2_eq2} \nonumber
\end{eqnarray}

since $\alpha>0$ and using lemma \ref{lema_normfro} we arrive at:
\begin{eqnarray}
\mathcal{DF}(X)[\dot{Z}(0)] & \leq & - \frac{\alpha}{2}||A||_F^{2}  \nonumber \\
                              & < & 0, \nonumber
\end{eqnarray}
which completes the proof.
\end{proof}

From proposition \ref{prop1}, we obtain that the mapping $Z(\tau)$ defined in (\ref{update_formulae}) is a retraction on the Stiefel manifold, see \cite{Absil,Absil2}. Hence, the convergence results in \cite{Absil} regarding retractions apply directly to Algorithm \ref{Alg1}.

\section{Strategies to select the step size}
\label{sec:4}
\subsection{Armijo condition}
\label{subsec:4}
It is well known that the steepest descent method with a fixed step size may not converge. However,
by choosing the step size wisely, convergence can be guaranteed and its speed can be accelerated without
significantly increasing of the cost at each iteration. At iteration $k$, we can choose a step size by minimizing
$\mathcal{F}(Z_k(\tau))$ along the curve $Z_k(\tau)$ with respect to $\tau$. Since finding its global minimizer is computationally expensive,
it is usually sufficient to obtain an approximated minimum, in order to deal with this, we use the Armijo condition \cite{Nocedal}
to select a step size:
\begin{equation}
  \mathcal{F}(Z_k(\tau)) \leq \mathcal{F}(X_k) + \rho_1\tau \mathcal{DF}(X_k)[\dot{Z}_k(0)] \label{Armijo1} \nonumber
\end{equation}
with $0<\rho_1<1.$\\

Our approximated monotone procedure is resumed in algorithm \ref{Alg1}.

\begin{algorithm}[H]
\begin{algorithmic}[1]
\REQUIRE $X_{0}\in St(n,p)$, $\tau_0>0$, $\sigma,\epsilon,\delta\in(0,1)$, $X_{-1} = X_{0}$, $k = 0$.\label{lin:lineaRara}
\ENSURE $X^{*}$ a local minimizer.
\WHILE{ $|| \nabla \mathcal{F}(X_k) ||_F >\epsilon$ }
\STATE $\tau = \tau_0$,
\WHILE { $\mathcal{F}(Z_k(\tau)) \geq \mathcal{F}(X_k) + \rho_1\tau \mathcal{DF}(X_k)[\dot{Z}_k(0)]$}
\STATE $\tau = \delta\tau$,
\ENDWHILE
\STATE $X_{k+1} = Z_k(\tau)$,
\STATE $k = k+1$,
\ENDWHILE
\STATE $X^{*} = X_{k}.$
\end{algorithmic}
\caption{Monotone Algorithm}\label{Alg1}
\end{algorithm}

\subsection{Nonmonotone search with Barzilai Borwein step size}
\label{subsec:5}
We propose a variant of algorithm \ref{Alg1}, instead of using Armijo condition, we acoplate Barzilai Borwein (BB-step) step size, see \cite{BBstep1}, which sometimes  improve the performance of linear search algorithms such as the steepest descent method without adding a lot of extra computational cost. This technique considers the classic updating of the line search methods:
\[
X_{k+1} = X_k - \alpha \nabla F(X_k),
\]
where $\nabla F$ is the gradient of the objective function, and $\alpha$ is the step size. This approach (Barzilai Borwein step size) proposes as the step size, the value $\alpha$ that satisfies the secant equation:
 \begin{eqnarray}
\alpha_k^{BB1} &:=& \textrm{argmin}_{\alpha > 0 }||B(\alpha)S_k - R_k ||, \label{bb1}
\end{eqnarray}
or well,
\begin{eqnarray}
\alpha_k^{BB2}  &:= & \textrm{argmin}_{\alpha> 0}||S_k - B(\alpha)^{-1}R_k ||, \label{bb2}
\end{eqnarray}
where $S_k = X_{k+1}-X_k$,  $R_k = \nabla F(X_{k+1}) - \nabla F(X_k)$ and the matrix $B(\alpha) = (\alpha I)^{-1}$, is considered an approximation of the Hessian of the objective function, so the step size  $\alpha$ is obtained by forcing a Quasi-Newton property. It follows from Eqs.~(\ref{bb1})-(\ref{bb2}) that,
\begin{equation}
\alpha_k^{BB1} = \frac{||S_k||_F^2}{Tr[S_k^{\top}R_k]}\quad  \textrm{and} \quad   \alpha_k^{BB2} = \frac{Tr[S_k^{\top}R_k]}{||R_k||_F^2}. \label{BB-steps}
\end{equation}
Since the quantities $\alpha_k^{BB1}$ and $\alpha_k^{BB2}$ can be negatives, it is usually taken the absolute value of any of these step sizes. On the other hand, the BB steps do not necessarily guarantee the descent of the objective function at each iteration, this may imply that the method does not converge. In order to solve this problem we will use a globalization technique  which guarantees global convergence on certain conditions \cite{DaiF,Raydan2}, that is, we use a  non-monotone linear search described as in \cite{ZhangHager}. From the above considerations we arrive at the following algorithm:

\begin{algorithm}[H]
\begin{algorithmic}[1]
\REQUIRE $X_{0}\in St(n,p)$, $\tau>0$, $0<\tau_m \leq \tau_M$, $\rho_1,\epsilon,\eta,\delta \in (0,1)$, $Q_0 = 1$, k=0. \label{lin:lineaRara}
\ENSURE $X^{*}$ a local minimizer.
\WHILE{ $|| \nabla \mathcal{F}(X_k) ||_F >\epsilon$ }
\WHILE { $\mathcal{F}(Z_k(\tau)) \geq C_k + \rho_1\tau \mathcal{DF}(X_k)[\dot{Z}_k(0)]$}
\STATE $\tau = \delta\tau$,
\ENDWHILE
\STATE $X_{k+1} = Z_k(\tau)$, $Q_{k+1} = \eta Q_k + 1$\,and\, $ C_{k+1} = (\eta Q_kC_k + \mathcal{F}(X_{k+1}) )/Q_{k+1}$,
\STATE choose $\tau = \alpha_k^{BB1}$ or $\tau = \alpha_k^{BB2}$ with $\alpha_k^{BB1},\alpha_k^{BB2}$ defined as in (\ref{BB-steps})
\STATE $ \tau = \max(\min( \tau, \tau_M ),\tau_m )$,
\STATE $k = k+1$,
\ENDWHILE
\STATE $X^{*} = X_{k}.$
\end{algorithmic}
\caption{Non-monotone Algorithm}\label{Alg2}
\end{algorithm}

Note that when $\eta=0$ the algorithm \ref{Alg2} becomes algorithm \ref{Alg1} with BB-step. Moreover, when we select the parameters $\alpha = 1$ and $\beta = 0$, we obtain a procedure very similar to the ``Modified Steepest Descent Method'' (\emph{MSDStifel}) proposed by Manton in \cite{Manton02}, however, in this case, our algorithm \ref{Alg2} is an accelerated version of \emph{MSDStifel}, since it incorporates a non-monotone search combined with the BB-step, which usually accelerates the gradient-type methods, whereas the algorithm presented in \cite{Manton02}, uses a double backtracking strategy to estimate the step size, that in practice is very slow since requires more computing.\\

In addition, in our implementation of the algorithm \ref{Alg2}, we update $X_{k+1}$ by the approximation (\ref{prop12}), specifically, we calculate
$\hat{X}_{k+1} = X_k-\tau_k H_k - \frac{\tau_k^{2}}{2}X_kH_k^{\top}H_k$, with $H_k$ as in (\ref{direction}), and in case the feasibility error is sufficiently small, that is, $||\hat{X}_{k+1}^{\top}\hat{X}_{k+1} - I||_F <$ 1e-13 this point is accepted and we update the new point by $X_{k+1} = \hat{X}_{k+1}$; otherwise, we update the new trial point by $X_{k+1} = U(:,1:p)V^{\top}$ where $[U,\Sigma,V] = \verb"svd"(X_k - \tau_k H_k, 0)$ using Matlab notation. Note that if the step size $\tau_k$ is small, or if the sequence $\{X_{k}\}$ approaches a stationary point of the Lagrangian function
(\ref{funlagrange}) then (\ref{prop12}) closely approximates the projection operator, in view of this, in several iterations our algorithm may saves the SVD's computation.\\

All these details make our algorithm \ref{Alg2} into a quicker and improved version of the \emph{MSDStifel} algorithm, for the case when the parameters
$\alpha = 1$ and $\beta = 0$; and in the case when we take another different selection of these parameters our method incorporates a mixture of descent directions that has shown to be a better direction that the one used by Manton, in some cases. In the section of experiments (see table \ref{tab:2}), we compare our algorithm \ref{Alg2} (with $\alpha = 1 $ and $\beta = 0$) against \emph{MSDStifel}, in this experiment it is clearly shown that our algorithm is much faster and efficient than the one proposed by Manton.

\section{Numerical experiments}
\label{sec:5}
In this section, we will study the performance of our approaches to different optimization problems with orthogonality constraints, and we show the efficiency and effectiveness of our proposals on these problems. We implemented both Algorithms 1 and 2 in MATLAB. Since
Algorithm 2 appears to be more efficient in most test sets, we compare both algorithms on the first test set in subsection \ref{subsec:7} and compare only Algorithm 2 with other two state-of-the-art algorithms on problems in the remaining test. In all experiments presented in upcoming subsections, we used the default parameters given by the author's implementations solvers for the abovementioned algorithms. We specify in each case the values of $\alpha$ and $\beta$ for the mixed direction. When $\alpha=1$ and $\beta=0$, our direction coincide with Manton's direction, and for this case, the difference in the procedures is established by the line search. The experiments were ran using Matlab 7.10 on a intel(R) CORE(TM) i3, CPU 2.53 GHz with 500 Gb of HD and 4 Gb of Ram.

\subsection{Stopping rules}
\label{subsec:6}
In our implementation, we are checking the norm of the gradient, and monitoring the relative changes of two consecutive iterates and their corresponding objective function values:
\begin{equation}
  \textrm{rel}_k^{x} := \frac{||X_{k+1} - X_{k}||_F}{\sqrt{n}}, \quad \textrm{rel}_{k}^{f} := \quad \frac{|\mathcal{F}(X_{k}) - \mathcal{F}(X_{k+1})|}{|F(X_k)| + 1}. \nonumber
\end{equation}

Then, given a maximum number K of iterations, our algorithm will stop at iteration $k$ if $k<K$ or $||\nabla \mathcal{F}(X_k)||_F\leq \epsilon$, or $\textrm{rel}_k^{x}< tolx$ and $\textrm{rel}_k^{f}< tolf$, or
\begin{equation}
  \textrm{mean}( [\textrm{rel}_{1+k-min(k,T)}^{x},\ldots,\textrm{rel}_{k}^{x}] )\leq 10tolx \quad and \quad \textrm{mean}( [\textrm{rel}_{1+k-min(k,T)}^{f},\ldots,\textrm{rel}_{k}^{f}] )\leq 10tolf \nonumber
\end{equation}
The default values for the parameters in our algorithms are: $\epsilon =$ 1e-4, $tolx =$ 1e-6, $tolf =$ 1e-12, $T = 5$, $K = 1000$, $\delta = 0.3$, $\rho_1 =$ 1e-4, $\tau_m =$ 1e-20, $\tau_M =$ 1e+20 and $\eta = 0.85$.\\

In the rest of this section we will denote by: ``Nitr'' to the number of iterations, ``Nfe'' to the number of evaluations of the objective function, ``Time'' to CPU time in seconds, ``Fval'' to the value of the evaluated objective function in the optimum estimated, ``NrmG'' to the norm of Gradient of the lagrangean function evaluated in the optimum estimate ($||\nabla F_1(\hat{X})||_F$) and ``Feasi'' to the feasibility error ($||\hat{X}^{\top}\hat{X} - I||_F$), obtained by the algorithms.

\subsection{Weighted orthogonal procrustes problem (WOPP)}
\label{subsec:7}
In this subsection, we consider the Weighted Orthogonal Procrustes Problem (WOPP) \cite{Francis}, which is formulated as follows:
\begin{equation}
  \min_{X\in \mathbb{R}^{m\times n}} \frac{1}{2}||AXC - B||_F^{2} \quad s.t. \quad X^{\top}X = I \nonumber
\end{equation}
where $A\in \mathbb{R}^{p\times m}$, $B\in\mathbb{R}^{p\times q}$ and $C\in \mathbb{R}^{n\times q}$ are given matrices.\\

For the numerical experiments we consider $n=q$, $p=m$, $A = PSR^{\top}$ and $C = Q\Lambda Q^{\top}$, where $P$ and $R$ are random orthogonal matrices, $Q\in \mathbb{R}^{n\times n}$ is a Householder matrix, $\Lambda\in \mathbb{R}^{n\times n}$ is a diagonal matrix with elements uniformly distributed in the interval $[\frac{1}{2},2]$ and $S$ is a diagonal matrix defined for each type of problem. As a starting point $X_{0}$ we randomly generate the starting points by the built-in functions $\verb"randn"$ and $\verb"svd"$:
\begin{equation}
\overline{X} = \verb"randn" (m, n), \quad [U,\Sigma,V] = \verb"svd"(\overline{X},0), \quad X_{0} = UV^{\top}. \nonumber
\end{equation}
When it is not specified how the data were generated, it was understood that they were generated following a normal standard distribution.\\


Against the exact solution of the problem, we create a known solution $Q_{*}\in \mathbb{R}^{m\times n}$ by taking $B = AQ_{*}C$. The tested problems were taken from \cite{Francis} and are described below.\\

\textbf{Problem 1}: The diagonal elements of $S$ are generated by a normal truncated distribution in the interval [10,12].\\

\textbf{Problem 2}: The diagonal of $S$ is given by $S_{ii} = i + 2r_{i}$, where $r_i$ are random numbers uniformly distributed in the interval [0,1].\\

\textbf{Problem 3}: Each diagonal element of $S$ is generated as follows: $S_{ii} = 1 + \frac{99(i-1)}{m+1} + 2r_{i}$, with $r_i$ uniformly distributed in the interval [0,1].\\

\begin{figure}
  \centering
  \begin{center}
  \subfigure[Iteration number vs $\alpha$, Problem 1]{\includegraphics[width=5cm]{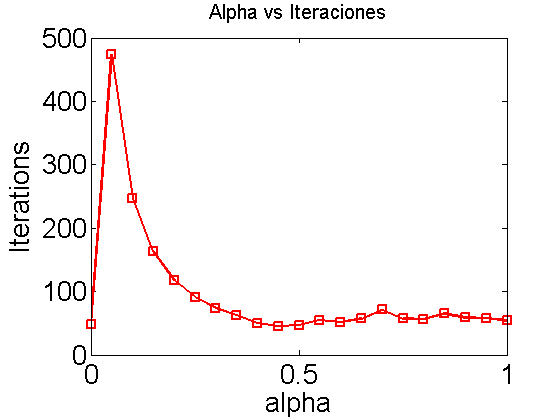}}  \hspace{1.5cm}
  \subfigure[Iteration number vs $\alpha$, Problem 2]{\includegraphics[width=5cm]{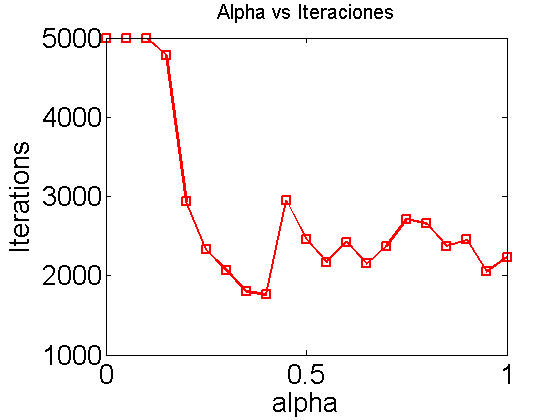}}  \hspace{1.5cm}
  \subfigure[Iteration number vs $\alpha$, Problem 3]{\includegraphics[width=5cm]{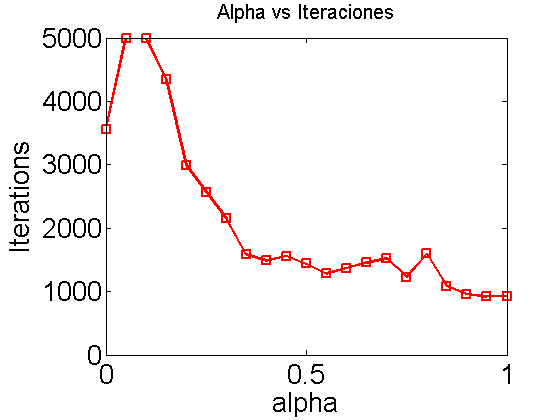}}  \hspace{1.5cm}
  \end{center}
  \caption{Iteration number vs $\alpha$ with $m = 100$ and $n = 65$}
  \label{fig:0}
\end{figure}

Note that when the matrix $S$ is generated following problem 1 then the matrix $A$ has a small condition number, and when $S$ is generated following problems 2 and 3, the matrix $A$ has a big condition number, which becomes bigger as $m$ grows. In view of this, in the remainder of this subsection, we will refer to \textbf{Problem 2} and \textbf{Problem 3} as ill conditioned WOPP's problems and to \textbf{Problem 1} as well conditioned problems.\\

First of all, we perform an experiment in order to calibrate the parameters ($\alpha$,$\beta$). To make the study more tractable, we consider the direction $H_k$ as a convex combination of $\nabla\mathcal{F}_1(X_k)$ and 
$\nabla\mathcal{F}_2(X_k)$, that is, we put $\beta= 1-\alpha $ with $\alpha \in [0,1]$. More specifically, we randomly generated one problem of each type, as explained above (\textbf{Problem 1}, \textbf{Problem 2} and \textbf{Problem 3} are generated) selecting $m = 100$ and $n=65$, and each one is solved for the following alpha values: $\alpha = 0: 0$.$05 :1$ (using Matlab notation). Figure \ref{fig:0} shows the curves of the iterations versus each alpha value, and for each type of problem. In this figure we see that for well-conditioned problems (\textbf{Problem 1}) our algorithm obtains the best result when it uses $\alpha = 0.45$, converging in a time of 0.1388 seconds and with gradient norm of $NrmG = 9.5410e-05 $. In this plot it seems that $ \alpha = 0$ also produces good results because it performs very few iterations, however the algorithm gets a bad result in terms of NrmG. On the other hand, we see that for ill-conditioned problems (\textbf{Problem 2,3}) the algorithm \ref{Alg2} obtains better results when $\alpha$ approaches 1, in particular in the plot we notice that with $\alpha = 1$ less iterations are done.\\

From this experiment and our experience testifying our algorithm for different values of $\alpha $, we note that for well-conditioned WOPP problems, our algorithm tends to perform better with values of $\alpha $ close to 0.5; whereas for WOPP ill-conditioned problems our procedure shows best results when $\alpha $ is close to 1. However, it will remain as future work to study the performance of our method for the case when the direction $H_k$ is taken as a linear combination instead of a convex combination of $\nabla\mathcal{F}_1(X_k) $ and $\nabla\mathcal{F}_2(X_k) $.\\

It is worth mentioning that deciding which set of parameters to use to obtain a good performance of our algorithm is not an easy task to predict, since in general these parameters will depend to a great extent on the objective function and the good or ill conditioning of the problem. A strategy to select these parameters could be to use some metaheuristic that optimize these parameters for a specific problem or application. However, based on these experiments and in our numerical experience running our algorithm, we will use for the following subsections $\beta = 1- \alpha $ and we will take alpha in the set $\{0$.$5,1\} $ or some number close to 1, because this choice usually reach good results.\\

In the second experiment, we will test the efficiency of the non-monotone line search, by making a comparison between the \textbf{Algorithm 1} and the \textbf{Algorithm 2}. In the Table~\ref{tab:1} we present the results of this comparison, in this experiment we choose $(\alpha,\beta) = (1,0)$ in both algorithms. We show the minimum (min), maximum (max) and the average (mean) of the results achieved from $K = 30$ simulations. According to the number of iterations a small difference in favor to algorithm 1 can be observed. However, in terms of CPU time, the dominance of algorithm 2 is overwhelming. This conclusion is also appreciated in figure \ref{fig:1}. In view of this, for the remaining experiments we only compare our algorithm 2, which we hereafter refer as \textbf{Grad-Retract}, to other state of the art procedures, namely Edelman, Manton, Wen-Yin \cite{Edelman,Manton02,WenYin}.\\


\begin{table}
\caption{\small{A comparative between the monotone and non-monotone case of Grad-retrac method}}
\label{tab:1}
\resizebox{12cm}{!}{\begin{tabular}{|c|l|c|c|c|c|c|c|c|c|c|c|c|c|}
  \hline
  & &  \multicolumn{4}{|c|}{ Problem 1, $m = 500$}&  \multicolumn{4}{|c|}{ Problem 2, $m = 100$}&  \multicolumn{4}{|c|}{ Problem 3, $m = 100$ } \\
  \hline
  & & \multicolumn{2}{|c|}{$n=20$} & \multicolumn{2}{|c|}{$n=70$} & \multicolumn{2}{|c|}{$n=10$} & \multicolumn{2}{|c|}{$n=50$} & \multicolumn{2}{|c|}{$n=10$} & \multicolumn{2}{|c|}{$n=50$} \\
  \hline
                 &         & \textbf{Nitr} & \textbf{Time} & \textbf{Nitr} & \textbf{Time} & \textbf{Nitr}  & \textbf{Time}    & \textbf{Nitr}   & \textbf{Time}     & \textbf{Nitr}  & \textbf{Time} & \textbf{Nitr} & \textbf{Time} \\
  \hline
	                &min	&24	    &0.1876	&33	    &0.6347	&716	&0.3561	&612	  &1.6184	&579	&0.4152	&578	    &1.6255\\
\textbf{Algorithm 1}&mean	&35.23	&0.3056	&42.30	&0.8156	&1182.1	&0.8397	&1447.60  &3.5599	&1074.9	&0.6466	&1318    	&3.4910\\
	                &max	&44	    &0.5432	&49	    &0.9340	&2920	&2.0068	&6902	  &10.802	&3036	&1.7612	&5628	    &12.736\\
\hline
	                &min	&22	    &0.0899	&34	    &0.4564	&700	&0.3066	&594	  &1.5256	&630	&0.3467	&639	    &1.5509\\
\textbf{Algorithm 2}&mean	&35.23	&0.1574	&42.97	&0.5480	&1193.3	&0.6762	&1456.10  &3.0685	&1060.8	&0.5769	&1355.90	&3.1333\\
	                &max	&45	    &0.2446	&50	    &0.6152	&2862	&1.6512	&7702	  &11.130	&2322	&1.3188	&5084	    &10.349\\
  \hline
\end{tabular}}
\end{table}

\begin{figure}
  \centering
  \begin{center}
  \subfigure[Iteration number]{\includegraphics[width=5cm]{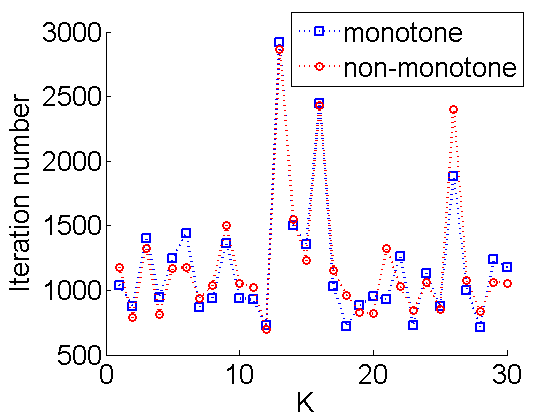}}  \hspace{1.5cm}
  \subfigure[CPU time in seconds]{\includegraphics[width=5cm]{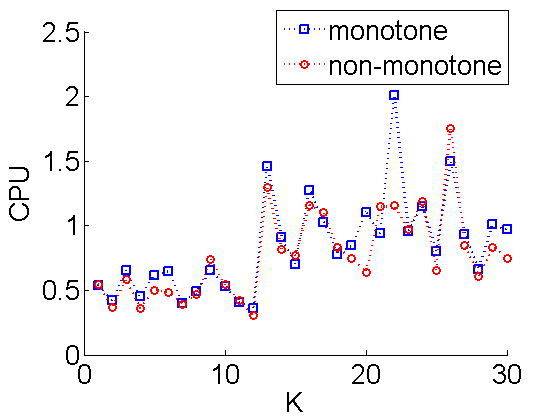}}
  \end{center}
  \caption{Algorithm 1 vs Algorithm 2, from Table \ref{tab:1}, solving Problem 2 with $m = 100$ and $n = 10$}
  \label{fig:1}
\end{figure}

\begin{table}
\centering
\caption{\small{A comparative between the Algorithm 2 and the Modified Steepest Descent Method proposed in \cite{Manton02}}}
\label{tab:2}
\resizebox{12cm}{!}{\begin{tabular}{|c|l|c|c|c|c|c|c|c|c|c|c|c|c|}
  \hline
  & &  \multicolumn{4}{|c|}{ Problem 1, $m = 50$}&  \multicolumn{4}{|c|}{ Problem 2, $m = 50$}&  \multicolumn{4}{|c|}{ Problem 3, $m = 50$ } \\
  \hline
  & & \multicolumn{2}{|c|}{$n=20$} & \multicolumn{2}{|c|}{$n=50$} & \multicolumn{2}{|c|}{$n=20$} & \multicolumn{2}{|c|}{$n=50$} & \multicolumn{2}{|c|}{$n=20$} & \multicolumn{2}{|c|}{$n=50$} \\
  \hline
                 &         & \textbf{Nitr} & \textbf{Time} & \textbf{Nitr} & \textbf{Time} & \textbf{Nitr}  & \textbf{Time}    & \textbf{Nitr}   & \textbf{Time}     & \textbf{Nitr}  & \textbf{Time} & \textbf{Nitr} & \textbf{Time} \\
  \hline
	                &min	&39	   &0.0969	&179	&0.9531	&1851	&19.811	&895	  &9.8746	&3959	&69.25	&2779	 &45.387\\
\textbf{MSDStiefel}	&mean	&54.70 &0.1317	&541.80	&4.0235	&4471.3	&111.30	&2836.9   &67.527	&6479.0	&186.0	&5611.70 &156.61\\
	                &max	&77	   &0.2208	&1494	&13.457	&7000	&239.69	&7000	  &270.02	&7000	&243.05	&7000	 &240.51\\
\hline
	                &min	&22	   &0.0181	&118	&0.1676	&369	&0.2808	&296	  &0.4930	&653	&0.4844	&417	&0.6182\\
\textbf{Grad-retrac}&mean	&25.73 &0.0221	&220.40	&0.3208	&635.4	&0.481	&548.47	  &0.8285	&1153.3	&0.8561	&748.70	&1.0956\\
	                &max	&30	   &0.0627	&444	&0.6487	&1461	&1.0983	&1319	  &1.6947	&2868	&2.2797	&1274	&1.8736\\
  \hline
\end{tabular}}
\end{table}

In the Table~\ref{tab:2}, we compare our \textbf{Grad-Retrac} versus the Modified Steepest Descent Method (MSDStiefel) proposed in \cite{Manton02}. More specifically, we compare the performance of these methods solving WOPP's. We create the matrices $A$, $C$ and $X_0$ as explained at the beginning of this subsection and generate the matrix $B$ randomly with their entries uniformly distributed in the range [0,1]. A total of 30 simulations were run. We see in Table \ref{tab:2} that the performance of our \textbf{Grad-Retrac} is consistently better than the performance of the algorithm proposed by Manton \cite{Manton02}, in terms of number of iterations, and CPU time. Both of the procedures solve all proposed test problems.\\

For all experiments presented in Tables \ref{tab:3}-\ref{tab:5}, we compare our algorithm \textbf{Grad-retrac} with the methods \textbf{PGST}, \textbf{OptStiefel} proposed in \cite{Francis}, \cite{WenYin}\footnote{The OptStiefel solver is available in http://www.math.ucla.edu/$\sim$wotaoyin/papers/feasible\_method\_matrix\_manifold.html} respectively, and we use as tolerance $\epsilon =$ 1e-5 and as maximum number of iterations $K = 8000$ for all methods. In addition, in the experiments shown in Table \ref{tab:3} we choose $\alpha = \beta = 0.5$ (see Eq.~\ref{direction}), while for the experiments shown in Tables \ref{tab:4}-\ref{tab:5}, we select: $\alpha = 1$, and $\beta = 0$ in the descent direction of our method. For each value to compare in these tables, we show the minimal (min), maximum (max) end the average (mean) of the results achieved from 30 simulations.\\

Table \ref{tab:3} include numerical results for well conditioned WOPP's of different sizes. We observe from this table that all methods converge to good solutions, while our method (\textbf{Grad-retrac}) always performs much better than \textbf{PGST} and \textbf{OptStiefel} in terms of the CPU time, except on orthogonal group problems (see table Ex.3 and Ex.4 in table \ref{tab:3}), where \textbf{OptStiefel} method shows better performance. Tables \ref{tab:4}-\ref{tab:5} presents the results achieved for all methods solving WOPP's in presence of ill conditioning. In these tables, we see that \textbf{Grad-retrac} and \textbf{OptStiefel} algorithms show similar performance in most experiments, while the PGST method exhibit the worst
performance in all cases examined here, however all methods achieve good solutions.\\

\begin{table}
\centering
\caption{\small{Performance of the methods for well conditioned WOPP problems}}
\label{tab:3}
\resizebox{12cm}{!}{\begin{tabular}{|l|c|c|c|c|c|c||c|c|c|c|c|}
  \hline
  \textbf{Methods} &	    &\textbf{Nitr}    &   \textbf{Time}	&\textbf{NrmG}	&\textbf{Fval}	&\textbf{Feasi}	&\textbf{Nitr}	 &\textbf{Time}	&\textbf{NrmG}	&\textbf{Fval}	&\textbf{Feasi} \\
  \hline
  \hline
  & & \multicolumn{5}{|c||}{ Ex.1: $n = 500$, $p = 70$, $problem = 1$  }& \multicolumn{5}{|c|}{ Ex.2: $n = 1000$, $p = 100$, $problem = 1$   }\\
  \hline			 																
		            &min	&51	   	&0.7151   &2.79e-05	&3.64e-12	&3.40e-14		&52	   	&3.4933  &9.92e-06    &1.98e-13  &4.21e-14\\
\textbf{OptStiefel} &mean	&56.70  &0.8595   &7.01e-05	&4.62e-11	&4.14e-14		&57.57  &4.7568  &9.15e-05    &5.37e-11  &5.15e-14\\	
		            &max	&64	   	&1.1443   &9.99e-05	&1.29e-10	&4.86e-14		&67	   	&7.3991  &8.71e-04    &2.05e-10  &5.97e-14\\	
\hline
		            &min	&35	   	&0.6698   &1.54e-05	&2.07e-12	&4.71e-15		&34	   	&3.2884  &2.84e-05	&2.66e-12  &5.83e-15\\
\textbf{PGST}       &mean	&37.73  &0.8310   &6.80e-05	&4.52e-11	&7.01e-15		&39.27  &4.8581  &6.65e-05	&3.44e-11  &9.42e-15\\
		            &max	&43	   	&1.1251   &9.82e-05	&1.34e-10	&9.65e-15		&44	   	&6.9226  &9.56e-05	&1.06e-10  &1.41e-14\\
\hline
		            &min	&37	   	&0.5178   &2.19e-05	&1.85e-12	&1.60e-14		&38	   	&2.4845  &1.35e-05	&6.34e-13  &2.05e-14\\
\textbf{Grad-retrac}&mean	&42.37  &0.7121   &6.86e-05	&4.11e-11	&3.24e-14		&43.03  &3.6456  &6.13e-05	&4.70e-11  &4.08e-14\\
		            &max	&48	   	&0.9896   &9.95e-05	&1.38e-10	&9.18e-14		&47	   	&5.5771  &9.97e-05	&1.51e-10  &9.88e-14\\
\hline
\hline
  & & \multicolumn{5}{|c||}{ Ex.3: $n = 200$, $p = 200$, $problem = 1$  }& \multicolumn{5}{|c|}{ Ex.4: $n = 300$, $p = 300$, $problem = 1$  }\\
\hline
		            &min	&46	   	&0.7488   &1.40e-05	&2.13e-13	&6.52e-14		&49	   	&2.3306  &7.14e-06    &8.04e-14  &1.41e-14\\
\textbf{OptStiefel}	&mean	&51.27  &1.0757   &6.02e-05	&7.62e-12	&6.99e-14		&52.93  &3.4949  &5.93e-05	&1.12e-11  &8.22e-14\\
		            &max	&57	   	&1.9995   &9.87e-05	&3.36e-11	&7.72e-14		&61	   	&5.9582  &9.99e-05	&3.42e-11  &9.98e-14\\
\hline
		            &min	&36	   	&1.3851   &1.13e-05	&1.03e-13	&7.58e-15    	&36	   	&4.3346  &1.65e-05	&9.48e-14  &8.53e-15\\
\textbf{PGST}		&mean	&41.03  &1.9658   &6.20e-05	&1.16e-11	&1.02e-14		&41.63  &5.6442  &6.17e-05	&6.58e-12  &1.03e-14\\
		            &max	&47	   	&3.4550   &9.92e-05	&3.87e-11	&1.49e-14		&46	   	&7.2430  &9.92e-05	&2.54e-11  &1.22e-14\\
\hline
		            &min	&47	   	&1.0511   &8.47e-06	&2.06e-13	&5.10e-14		&47	   	&3.3157  &1.51e-05	&2.69e-13  &6.60e-14\\
\textbf{Grad-retrac}&mean	&52.80  &1.5083   &6.40e-05	&1.18e-11	&5.67e-14		&53.67  &4.6731  &6.06e-05	&9.91e-12  &7.10e-14\\
		            &max	&69	   	&2.5606   &9.86e-05	&3.80e-11	&9.93e-14		&62	   	&6.6604  &9.72e-05	&3.62e-11  &9.43e-14\\
\hline
\hline
  & & \multicolumn{5}{|c||}{ Ex.5: $n = 800$, $p = 450$, $problem = 1$  }& \multicolumn{5}{|c|}{ Ex.6: $n = 1000$, $p = 500$, $problem = 1$  }\\
\hline
		            &min	&56	   	&24.126   &3.50e-05	&2.54e-12	&2.15e-14		&57	   	&43.799  &3.59e-05	&2.14e-12  &2.43e-14\\
\textbf{OptStiefel} &mean	&63.27  &33.316   &7.37e-05	&6.30e-11	&2.23e-14		&62.13  &59.386  &7.53e-05	&5.05e-11  &2.53e-14\\
		            &max	&70	   	&39.796   &9.86e-05	&1.36e-10	&2.31e-14		&74	   	&103.86  &9.71e-05	&1.30e-10  &2.65e-14\\
\hline
		            &min	&39	   	&23.808   &3.05e-05	&1.80e-12	&9.57e-15		&40	   	&41.973  &1.39e-05	&4.45e-13  &1.07e-14\\
\textbf{PGST}		&mean	&42.03  &31.916   &6.28e-05	&1.88e-11	&1.17e-14		&42.60  &57.541  &5.22e-05	&1.22e-11  &1.23e-14\\
		            &max	&44	   	&41.976   &9.80e-05	&5.97e-11	&1.40e-14		&44	   	&139.13  &9.16e-05	&3.41e-11  &1.47e-14\\
\hline
		            &min	&43	   	&21.228   &2.48e-05	&1.07e-12	&7.66e-14		&41	   	&32.012  &4.21e-05	&2.46e-12  &7.79e-14\\
\textbf{Grad-retrac}&mean	&48.73  &26.162   &6.63e-05	&3.41e-11	&8.77e-14		&45.50  &39.921  &7.14e-05	&3.98e-11  &9.09e-14\\
		            &max	&55	   	&31.251   &9.95e-05	&1.19e-10	&9.85e-14		&52	   	&58.320  &9.66e-05	&1.01e-10  &9.96e-14\\
\hline
\end{tabular}}
\end{table}

\begin{table}
\centering
\caption{\small{Performance of the methods for ill conditioned WOPP problems}}
\label{tab:4}
\resizebox{12cm}{!}{\begin{tabular}{|l|c|c|c|c|c|c||c|c|c|c|c|}
  \hline

  \textbf{Methods} &	    &\textbf{Nitr}    &   \textbf{Time}	&\textbf{NrmG}	&\textbf{Fval}	&\textbf{Feasi}	&\textbf{Nitr}	 &\textbf{Time}	&\textbf{NrmG}	&\textbf{Fval}	&\textbf{Feasi} \\
  \hline	
  \hline		
  & & \multicolumn{5}{|c||}{ Ex.7: $n = 100$, $p = 50$, $problem = 2$  }& \multicolumn{5}{|c|}{ Ex.8: $n = 500$, $p = 20$, $problem = 2$   }\\
  \hline
		            &min	&614	 &640	  &1.36e-04 &3.61e-11	&2.79e-15		&4841	&13.0517  &9.32e-04	&5.48e-09	&1.87e-15\\	
\textbf{OptStiefel}	&mean	&903.40	 &1.3854  &6.26e-04 &3.69e-09	&3.38e-15		&9827	&30.1954  &0.0989	&0.0288	    &4.96e-14\\
		            &max	&1348	 &5.4052  &4.90e-03 &8.42e-09	&3.93e-15		&15000	&58.5938  &2.6101	&0.6063	    &9.84e-14\\	
\hline
		            &min	&556	 &934	  &1.16e-04 &2.79e-11	&4.93e-15		&4519	&25.8166  &5.20e-04	&6.72e-09	&4.40e-15\\	
\textbf{PGST}		&mean	&857.17	 &3.3145  &6.89e-04 &2.33e-09	&6.97e-15		&6849.1 &46.7116  &0.0159	&1.11e-05	&6.67e-15\\	
		            &max	&1440	 &5.6725  &6.60e-03 &2.62e-08	&1.12e-14		&11815	&96.0478  &0.3035	&3.27e-04	&9.85e-15\\	
\hline		
		            &min	&632	 &653	  &1.40e-04	&3.91e-11	&1.03e-14		&5176	&13.0955  &5.84e-04	&1.38e-08	&7.14e-15\\	
\textbf{Grad-retrac}&mean	&929.53	 &1.8521  &3.99e-04	&2.53e-09	&3.02e-14		&9702.8 &28.5254  &0.0294	&0.0385	    &4.42e-14\\	
		            &max	&1925	 &3.6914  &1.70e-03	&5.76e-09	&9.65e-14		&15000	&61.9137  &0.5755	&0.6063	    &9.91e-14\\	
  \hline
  \hline
  & & \multicolumn{5}{|c||}{ Ex.9: $n = 100$, $p = 100$, $problem = 2$  }& \multicolumn{5}{|c|}{ Ex.10: $n = 200$, $p = 200$, $problem = 2$  }\\
  \hline
			        &min	&477    &1.5892	  &1.17e-04	&8.28e-11	&5.20e-15       &1088	 &13.841  &3.37e-04	&1.21e-10	&1.05e-14\\
\textbf{OptStiefel} &mean	&679.50	&2.5212	  &5.60e-04	&2.36e-09	&5.59e-15	    &1611.60 &20.469  &3.30e-03	&3.50e-08	&1.10e-14\\
		            &max	&1102	&6.5418	  &1.80e-03	&6.40e-09	&6.19e-15		&2961	 &37.932  &2.03e-02	&3.70e-07	&1.16e-14\\
  \hline
		            &min	&543	&5.4842	  &6.08e-05	&5.82e-13	&6.22e-15	    &1292	 &45	  &2.21e-04	&1.04e-11	&7.90e-15\\
\textbf{PGST}		&mean	&859.73	&8.8871	  &6.16e-04	&1.14e-09	&8.76e-15	    &2138.00 &77.01	  &3.00e-03	&1.45e-08	&1.02e-14\\	
		            &max	&1460	&16.469	  &2.70e-03	&6.01e-09	&1.18e-14	    &3847	 &138	  &2.28e-02	&1.92e-07	&1.34e-14\\
\hline
		            &min	&480	&2.4762	  &1.40e-04	&5.25e-11	&2.40e-14		&900	 &15.509  &5.75e-04	&3.80e-09	&2.77e-14\\
\textbf{Grad-retrac}&mean	&665.87	&3.2475	  &1.60e-03	&2.22e-09	&4.10e-14		&1658.80 &25.174  &2.10e-03	&1.41e-08	&6.51e-14\\	
		            &max	&954	&4.7347	  &2.39e-02	&9.53e-09	&8.67e-14	    &3036	 &43.884  &9.20e-03	&3.51e-08	&9.95e-14\\
  \hline
  \hline
  & & \multicolumn{5}{|c||}{ Ex.11: $n = 200$, $p = 100$, $problem = 2$  }& \multicolumn{5}{|c|}{ Ex.12: $n = 200$, $p = 150$, $problem = 2$  }\\
  \hline
		            &min	&1537	&9.7212	  &4.35e-04	&2.92e-09	&5.64e-15		&1450	 &14.489  &3.30e-04	&1.25e-09	&8.06e-15\\
\textbf{OptStiefel} &mean	&2291.1	&14.680	  &1.60e-03	&2.95e-08	&6.37e-15		&2158.40 &21.815  &1.90e-03	&1.64e-08	&8.64e-15\\	
		            &max	&4042	&27.296	  &8.70e-03	&2.35e-07	&7.02e-15	    &4829	 &48.014  &9.60e-03	&3.91e-08	&9.44e-15\\
\hline
		            &min	&1057	&13.740	  &2.54e-04	&1.26e-09	&6.21e-15	    &1164	 &26.374  &2.01e-04	&1.11e-10	&7.11e-15\\
\textbf{PGST}		&mean	&2152.4 &27.127	  &1.40e-03	&1.16e-08	&8.45e-15	    &2125.50 &49.825  &2.00e-03	&8.35e-09	&8.33e-15\\
		            &max	&3180	&39.891	  &3.50e-03	&3.66e-08	&1.29e-14	    &3337	 &77.428  &7.30e-03	&3.26e-08	&1.09e-14\\
\hline
		            &min	&1454	&9.4400	  &3.93e-04 &4.34e-09	&1.81e-14	    &1263	 &17.645  &3.11e-04	&5.97e-10	&2.22e-14\\
\textbf{Grad-retrac}&mean	&2743.4 &16.738   &2.40e-03	&3.45e-08	&5.11e-14		&2268.10 &27.095  &2.00e-03	&2.35e-08	&4.49e-14\\
		            &max	&5344	&32.535	  &3.16e-02	&3.53e-07	&9.82e-14	    &3422	 &38.263  &1.30e-02	&1.25e-07	&9.56e-14\\
 \hline
\end{tabular}}
\end{table}

\begin{table}
\centering
\caption{\small{Performance of the methods for ill conditioned WOPP problems}}
\label{tab:5}
\resizebox{12cm}{!}{\begin{tabular}{|l|c|c|c|c|c|c||c|c|c|c|c|}
  \hline
  \textbf{Methods} &	    &\textbf{Nitr}    &   \textbf{Time}	&\textbf{NrmG}	&\textbf{Fval}	&\textbf{Feasi}	&\textbf{Nitr}	 &\textbf{Time}	&\textbf{NrmG}	&\textbf{Fval}	&\textbf{Feasi} \\
  \hline
  \hline
  & & \multicolumn{5}{|c||}{ Ex.13: $n = 500$, $p = 20$, $problem = 3$  }& \multicolumn{5}{|c|}{ Ex.14: $n = 1000$, $p = 10$, $problem = 3$   }\\
  \hline
  		            &min	&1207    &5.7719	&1.32e-04	&1.57e-09	&3.54e-15		&1206	 &15.846  &1.69e-04  &6.67e-09	&7.80e-15\\
\textbf{OptStiefel} &mean	&1888.00 &10.739	&7.08e-04	&2.72e-08	&3.53e-14		&2299.90 &29.248  &9.30e-04  &8.61e-08	&1.89e-14\\
		            &max	&3135	 &19.596	&3.00e-03	&1.24e-07	&7.58e-14	    &3858	 &48.623  &4.30e-03  &8.44e-07	&6.24e-14\\	
\hline
		            &min	&986	 &9.8174	&5.33e-05	&1.53e-11	&3.31e-15	    &1288	 &25.262  &1.43e-04  &1.25e-09	&2.33e-15\\
\textbf{PGST}		&mean	&1872.10 &17.895	&6.48e-04	&2.86e-08	&6.67e-15	    &2112.40 &39.082  &7.39e-04  &3.95e-08	&4.16e-15\\
		            &max	&3140	 &30.382	&4.70e-03	&4.22e-07	&1.03e-14		&3547	 &64.554  &3.00e-03  &2.82e-07	&6.44e-15\\
\hline
		            &min	&1208	 &5.3518	&9.00e-05	&1.19e-09	&6.31e-15		&1020	 &13.023  &1.97e-04  &6.49e-09	&3.37e-15\\
\textbf{Grad-retrac}&mean	&1882.20 &10.263	&4.73e-04	&2.33e-08	&3.00e-14		&2382.40 &29.756  &1.30e-03  &9.70e-08	&2.24e-14\\	
		            &max	&2897	 &19.609	&2.40e-03	&4.68e-08	&9.65e-14		&4268	 &52.634  &1.21e-02  &9.39e-07	&8.15e-14\\	
\hline
\hline
  & & \multicolumn{5}{|c||}{ Ex.15: $n = 100$, $p = 100$, $problem = 3$  }& \multicolumn{5}{|c|}{ Ex.16: $n = 200$, $p = 200$, $problem = 3$  }\\
\hline
		            &min	&481	 &1.4359	&1.05e-04	&6.33e-11	&5.25e-15	    &568	 &7.4377  &1.69e-04  &3.21e-11  &1.06e-14\\
\textbf{OptStiefel} &mean	&706.77	 &2.1294	&8.52e-04	&2.21e-09	&5.55e-15		&876.33	 &11.355  &8.42e-04  &3.92e-09	&1.10e-14\\	
		            &max	&1148	 &3.3911	&3.30e-03	&7.76e-09	&5.88e-15		&1336	 &16.936  &3.50e-03  &1.27e-08	&1.17e-14\\
\hline
		            &min	&440	 &3.6841	&6.46e-05	&8.50e-13	&6.79e-15		&542	 &19.691  &7.02e-05  &9.63e-13	&8.57e-15\\
\textbf{PGST}		&mean	&858.13	 &7.6421	&5.37e-04	&8.98e-10	&8.93e-15		&1042.10 &36.663  &1.30e-03  &5.87e-09	&1.04e-14\\
		            &max	&1542	 &14.094	&1.40e-03	&7.55e-09	&1.31e-14	    &1410	 &50.839  &9.50e-03  &1.05e-07	&1.33e-14\\
\hline
		            &min	&426	 &1.9723	&2.11e-04	&1.98e-10	&2.23e-14		&512	 &9.5307  &1.35e-04  &2.82e-11	&2.81e-14\\
\textbf{Grad-retrac}&mean	&680.83	 &2.8749	&7.41e-04	&2.59e-09	&3.40e-14		&848.77	 &14.911  &1.00e-03  &4.69e-09	&5.37e-14\\
		            &max	&1098	 &4.5067	&3.40e-03	&9.91e-09	&7.66e-14		&1290	 &21.070  &8.80e-03  &2.58e-08	&9.15e-14\\
\hline
\hline
  & & \multicolumn{5}{|c||}{ Ex.17: $n = 200$, $p = 100$, $problem = 3$  }& \multicolumn{5}{|c|}{ Ex.18: $n = 200$, $p = 150$, $problem = 3$  }\\
\hline

		            &min	&696	 &5.1556	&1.48e-04	&1.52e-10	&5.19e-15		&632	 &6.2744  &1.24e-04  &9.49e-11	&7.99e-15\\
\textbf{OptStiefel} &mean	&1176.80 &10.889	&1.40e-03	&7.29e-09	&5.63e-15	    &946.20	 &9.4612  &7.73e-04  &4.39e-09	&8.62e-15\\	
		            &max	&2276	 &19.284	&1.33e-02	&4.92e-08	&6.07e-15	    &1394	 &13.877  &2.70e-03  &1.22e-08	&9.17e-15\\
\hline
		            &min	&520	 &10.374	&6.55e-05	&8.36e-13	&6.04e-15   	&630	 &13.926  &6.84e-05  &1.53e-12	&7.60e-15\\
\textbf{PGST}	    &mean	&1011.20 &18.462	&1.10e-03	&1.38e-08	&8.16e-15	    &967.80	 &21.567  &4.85e-04  &1.06e-09	&8.74e-15\\	
		            &max	&1481	 &34.484	&1.29e-02	&3.58e-07	&1.20e-14		&1509	 &34.140  &2.00e-03	 &6.82e-09	&1.09e-14\\
\hline
		            &min	&813	 &7.8483	&7.77e-05	&1.48e-10	&1.65e-14	    &692	 &9.4317  &7.39e-05	 &2.36e-12	&2.07e-14\\
\textbf{Grad-retrac}&mean	&1231.10 &11.9001	&1.30e-03	&8.66e-09	&4.29e-14       &1120.50 &15.026  &1.20e-03	 &8.79e-09	&4.19e-14\\
		            &max	&1887	 &21.735	&6.80e-03	&7.18e-08	&9.27e-14	    &1908	 &26.238  &8.60e-03	 &8.65e-08	&9.66e-14\\
  \hline
\end{tabular}}
\end{table}

In Figure \ref{fig:2}, we give the convergence history of each method in the WOPP's problems shown in the tables \ref{tab:3}-\ref{tab:5}. More specifically, we depict the average assessments of objective function as well as the average gradient norm for Ex.5, Ex.9 and Ex.15 experiments. In these charts, we observe that in well conditioned problems (see Figure \ref{fig:2}(a)-(b)) the method \textbf{PGST} converge faster than the other, but in the presence of ill conditioning, our method is faster. In addition, in all charts, \textbf{OptStiefel} an our method showed similar performance.

\begin{figure}
  \centering
  \begin{center}
  \subfigure[Objective Function, Table \ref{tab:3}, Ex.5 ]{\includegraphics[width=5cm]{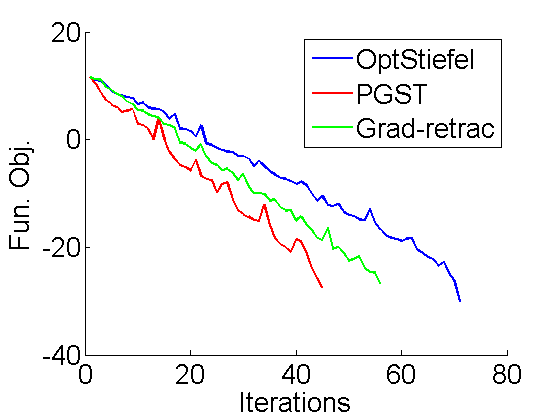}}  \hspace{1.5cm}
  \subfigure[Gradient norm, Table \ref{tab:3}, Ex.5  ]{\includegraphics[width=5cm]{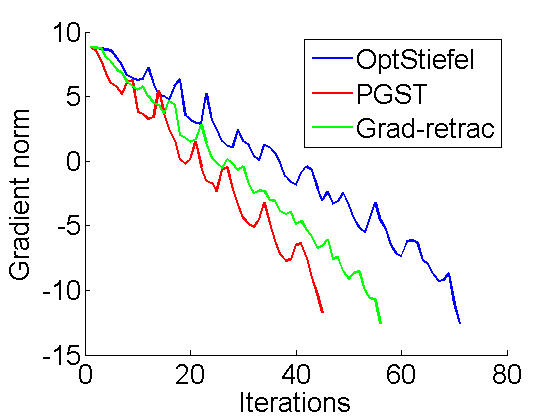}}

  \subfigure[Objective Function, Table \ref{tab:4}, Ex.9 ]{\includegraphics[width=5cm]{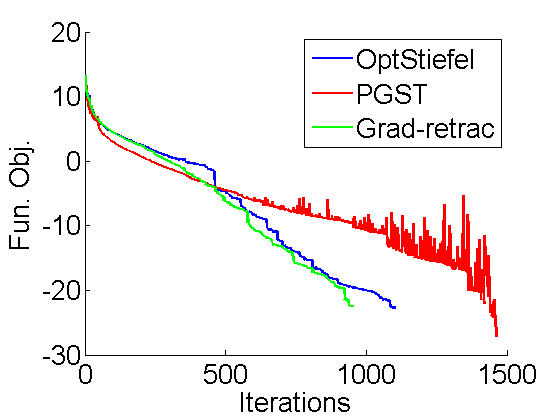}}  \hspace{1.5cm}
  \subfigure[Gradient norm, Table \ref{tab:4}, Ex.9  ]{\includegraphics[width=5cm]{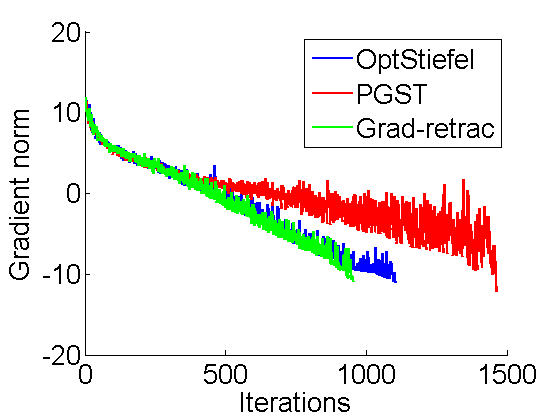}}

  \subfigure[Objective Function, Table \ref{tab:5}, Ex.15 ]{\includegraphics[width=5cm]{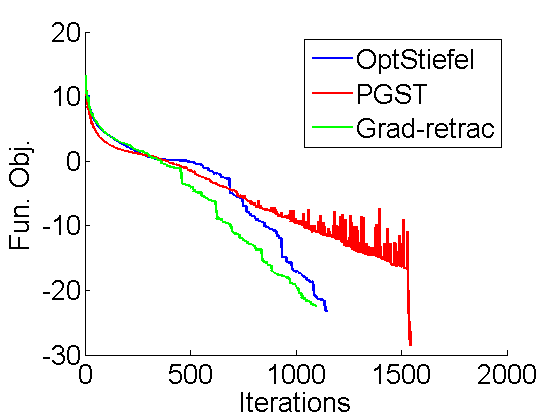}}  \hspace{1.5cm}
  \subfigure[Gradient norm, Table \ref{tab:5}, Ex.15  ]{\includegraphics[width=5cm]{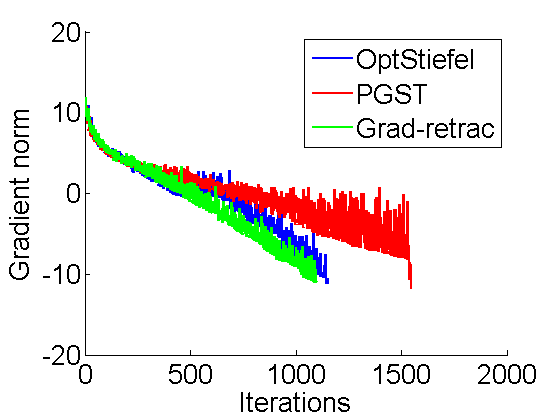}}
  \end{center}
  \caption{Graphics assessments of the average objective function and the average gradient norm from 30 simulations. The y-axis is on a
  logarithmic scale.}
  \label{fig:2}
\end{figure}

\subsection{Total energy minimization}
\label{subsec:8}
For next experiments we consider the following total energy minimization problem:
\begin{equation}
  \min_{X\in \mathbb{R}^{n\times k}} E_{total}(X) = \frac{1}{2}Tr[X^{\top}LX] + \frac{\mu}{4}\rho(X)^{\top}L^{\dag}\rho(X) \quad s.t. \quad X^{\top}X = I_k \label{TE_problem} \nonumber
\end{equation}
where $L$ is a discrete Laplacian operator, $\mu>0$ is a given constant, $\rho(X) := diag(XX^{\top})$ is the vector containing the diagonal elements of the matrix $XX^{\top}$ and $L^{\dag}$ is the Moore-Penrose generalized inverse of matrix $L$. The total energy minimization problem (\ref{TE_problem}) is a simplified version of the HartreeFock (HF) total energy minimization problem and the Kohn-Sham (KS) total energy minimization problem in electronic structure calculations (see for instance \cite{martin2004electronic,ostlund1996modern,yang2006constrained,yang2007trust}). The first order necessary conditions for the total energy minimization problem (\ref{TE_problem}) are given by:
\begin{eqnarray}
  H(X)X - X\Lambda &=& 0 \nonumber \\
  X^{\top}X &=& I_k, \nonumber
\end{eqnarray}
where the diagonal matrix $\Lambda$ contains the $k$ smallest eigenvalues of the symmetric matrix $H(X) := L + \mu Diag(L^{\dag}\rho(X))$. The symbol $Diag(x)$ is a diagonal matrix with a vector $x$ on its diagonal.\\

For examples 6.1-6.4 below taken from \cite{ZhaoBai}, we repeat our experiments over 100 different starting points, moreover, we use a tolerance of $\epsilon =$ 1e-4 and a maximum number of iterations $K = 1000$. To show the effectiveness of our method over the problem (\ref{TE_problem}), we report the numerical results for Examples 6.1-6.4 with different choices of $n$, $k$, and $\mu$, and we compare our method with the Steepest Descent method (\textbf{Steep-Dest}), Trust-Region method (\textbf{Trust-Reg}) and Conjugate Gradient method (\textbf{Conj-Grad}) from ``\emph{manopt}'' tool­box\footnote{The tool-box manopt is available in http://www.manopt.org/}.\\

\underline{Example 6.1 \cite{ZhaoBai}}: We consider the nonlinear eigenvalue problem for different
choices of $n$; $k$; $\mu$: (a) $n = 2$; $k = 1$; $\mu = 3$; (b) $n = 10$; $k = 2$; $\mu = 0.6$; (c) $n = 100$; $k = 10$; $\mu = 0.005$; (d) $n = 100$; $k = 4$; $\mu = 0.001$. \\

\underline{Example 6.2 \cite{ZhaoBai}}: We consider the nonlinear eigenvalue problem for different
choices of $n$; $k$; $\mu$: (a) $n = 2$; $k = 1$; $\mu = 9$; (b) $n = 10$; $k = 2$; $\mu = 3$; (c) $n = 100$; $k = 10$; $\mu = 1$; (d) $n = 100$; $k = 4$; $\mu = 2$.\\

\underline{Example 6.3 \cite{ZhaoBai}}: We consider the nonlinear eigenvalue problem for $k = 10$; $\mu = 1$,
and varying $n = 200, 400, 800, 1000$.\\

\underline{Example 6.4 \cite{ZhaoBai}}: We consider the nonlinear eigenvalue problem for $n = 100$ and
$k = 20$ and varying $\mu$.\\

In all these experiments, the $L$ matrix in the problem (\ref{TE_problem}) is constructed as the one-dimensional discrete
Laplacian with $2$ on the diagonal and $-1$ on the sub- and sup-diagonals. Furthermore, for all these experiments we select: $\alpha = 0.7$, and $\beta = 0.3$ in the descent direction of our method. \\

The results for the Example 6.1 and Example 6.2 are shown in Tables \ref{tab:6}-\ref{tab:7}. We see from these tables that our method (\textbf{Grad-retrac}) is more efficient than the other methods from ``\emph{manopt}'' tool­box in terms of CPU time. Table \ref{tab:8}
gives numerical results for Example 6.3. We see from Table \ref{tab:8} that in almost all cases, our method is more efficient than the other
in terms of CPU time. Only in experiments Ex.11 and Ex.12 (see Table \ref{tab:8}), the Conjugate Gradient method from ``\emph{manopt}'' library gets better performance than our method slightly. In addition, all algorithms take longer to converge as it increases the value of $n$, nevertheless, when the problems are larger size, the increase in the convergence time of the \textbf{Conj-Grad} and \textbf{Grad-retrac} methods is much less than the of the other methods.\\


\begin{table}
\centering
\caption{\small{Numerical results for Example 6.1}}
\label{tab:6}
\resizebox{12cm}{!}{\begin{tabular}{|l|c|c|c|c|c||c|c|c|c|c|}
  \hline
  & \multicolumn{5}{|c||}{ Ex.1:  $(n,k,\mu) =  (2,1,3)$  }& \multicolumn{5}{|c|}{ Ex.2:  $(n,k,\mu) =    (10,2,0.6) $  }\\
  \hline
  \hline
  \textbf{Methods} &	    \textbf{Nitr}	 &   \textbf{Time}	&\textbf{NrmG}	&\textbf{Fval}	&\textbf{Feasi}	&\textbf{Nitr}	&\textbf{Time}	&\textbf{NrmG}	&\textbf{Fval}	&\textbf{Feasi} \\
  \hline
  \textbf{Trust-Reg}        &4.48	&0.0141	&1.19e-04	&0.8750	&1.95e-16			&7.79	&0.0264	&1.51e-04	&0.8495	&5.27e-16\\
  \textbf{Steep-Dest}       &6.52	&0.0160	&4.22e-04	&0.8750	&2.00e-16			&23.54	&0.0519	&5.51e-04	&0.8495	&5.12e-16\\
  \textbf{Conj-Grad}        &5.62	&0.0126	&7.40e-03	&0.8752	&1.94e-16			&17.08	&0.0309	&4.60e-03	&0.8513	&5.31e-16\\				
  \textbf{Grad-retrac}      &5.72	&0.0032	&1.56e-05	&0.8750	&2.96e-15			&22.11	&0.0087	&6.47e-05	&0.8495	&1.21e-14\\
  \hline
  \hline
  & \multicolumn{5}{|c||}{ Ex.3:  $(n,k,\mu) =   (100,10,0.005)$  }& \multicolumn{5}{|c|}{ Ex.4:  $(n,k,\mu) =   (100,4,0.001) $  }\\
  \hline
  \textbf{Trust-Reg}        &13.00	&0.1524	&1.81e-04	&1.0547	&2.58e-15			&11.80	&0.1632	&3.05e-04	&5.02e-02	&1.37e-15\\
  \textbf{Steep-Dest}       &535.71	&2.1547	&3.58e-04	&1.0548	&1.99e-15			&930.94	&3.1733	&2.46e-04	&5.02e-02	&1.32e-15\\
  \textbf{Conj-Grad}        &69.32	&0.1897	&8.14e-04	&1.0548	&2.72e-15			&80.46	&0.1901	&8.92e-04	&5.03e-02	&1.45e-15\\			
  \textbf{Grad-retrac}      &105.29	&0.0782	&7.46e-05	&1.0547	&2.52e-14			&129.09	&0.0711	&8.03e-05	&5.02e-02	&2.55e-14\\
  \hline
\end{tabular}}
\end{table}

\begin{table}
\centering
\caption{\small{Numerical results for Example 6.2}}
\label{tab:7}
\resizebox{12cm}{!}{\begin{tabular}{|l|c|c|c|c|c||c|c|c|c|c|}
  \hline
  & \multicolumn{5}{|c||}{ Ex.5:  $(n,k,\mu) =  (2,1,9)$  }& \multicolumn{5}{|c|}{ Ex.6:  $(n,k,\mu) =     (10,2,3) $  }\\
  \hline
  \hline
  \textbf{Methods} &	    \textbf{Nitr}	 &   \textbf{Time}	&\textbf{NrmG}	&\textbf{Fval}	&\textbf{Feasi}	&\textbf{Nitr}	&\textbf{Time}	&\textbf{NrmG}	&\textbf{Fval}	&\textbf{Feasi} \\
  \hline
  \textbf{Trust-Reg}    &3.53	&0.0090	&1.07e-04	&1.9350	&2.00e-16			&8.63	&0.0260	&2.33e-04	&2.5046	&5.78e-16\\
  \textbf{Steep-Desct}  &7.15	&0.0172	&4.22e-04	&1.6250	&2.09e-16			&17.76	&0.0392	&5.41e-04	&2.5046	&5.16e-16\\
  \textbf{Conj-Grad}    &4.75	&0.0106	&5.23e-02	&1.8380	&2.25e-16			&15.69	&0.0284	&7.12e-04	&2.5046	&5.50e-16\\			
  \textbf{Grad-retrac}  &5.47	&0.0028	&1.37e-05	&1.9150	&3.24e-15			&19.35	&0.0077	&5.92e-05	&2.5046	&1.92e-14\\

  \hline
  \hline
  & \multicolumn{5}{|c||}{ Ex.7:  $(n,k,\mu) =   (100,10,1)$  }& \multicolumn{5}{|c|}{ Ex.8:  $(n,k,\mu) =   (100,4,2) $  }\\
  \hline
  \textbf{Trust-Reg}    &19.77	&0.1622	&5.53e-04	&35.7086	&2.76e-15		&12.10	&0.0601	&2.66e-04	&7.7005	&1.42e-15\\
  \textbf{Steep-Desct}  &149.89	&0.5982	&4.33e-04	&35.7086	&1.96e-15	    &42.33	&0.1260	&4.65e-04	&7.7005	&1.48e-15\\
  \textbf{Conj-Grad}    &44.03	&0.1134	&8.10e-04	&35.7086	&3.04e-15		&25.76	&0.0558	&7.00e-04	&7.7005	&1.48e-15\\			
  \textbf{Grad-retrac}  &65.33	&0.0464	&7.47e-05	&35.7086	&2.74e-14		&36.77	&0.0200	&6.58e-05	&7.7005	&1.85e-14\\
  \hline
\end{tabular}}
\end{table}
Table \ref{tab:9} lists numerical results for Example 6.4. In this table, we note that our method gets its best performance when it used the smaller size of $\mu$, we also observe that our method is more efficient than the other in all of the experiments list in this table.

\begin{table}
\centering
\caption{\small{Numerical results for Example 6.3}}
\label{tab:8}
\resizebox{12cm}{!}{\begin{tabular}{|l|c|c|c|c|c||c|c|c|c|c|}
  \hline
  & \multicolumn{5}{|c||}{ Ex.9:  $(n,k,\mu) =  (200,10,1)$  }& \multicolumn{5}{|c|}{ Ex.10:  $(n,k,\mu) =     (400,10,1) $  }\\
  \hline
  \hline
  \textbf{Methods} &	    \textbf{Nitr}	 &   \textbf{Time}	&\textbf{NrmG}	&\textbf{Fval}	&\textbf{Feasi}	&\textbf{Nitr}	&\textbf{Time}	&\textbf{NrmG}	&\textbf{Fval}	&\textbf{Feasi} \\
  \hline
  \textbf{Trust-Reg}    &20.13	&0.2146	&5.28e-04	&35.7086	&2.90e-15			&21.29	&0.6052	&5.42e-04	&35.7086	&3.05e-15\\
  \textbf{Steep-Dest}   &156.38	&0.7624	&3.70e-04	&35.7086	&2.07e-15			&155.39	&1.1328	&3.92e-04	&35.7086	&2.19e-15\\
  \textbf{Conj-Grad	}   &45.01	&0.1362	&7.85e-04	&35.7086	&3.07e-15			&46.74	&0.2308	&7.92e-04	&35.7086	&3.30e-15\\			
  \textbf{Grad-retrac}  &65.38	&0.0654	&7.48e-05	&35.7086	&2.23e-14			&67.27	&0.1644	&7.17e-05	&35.7086	&2.77e-14\\
  \hline
  \hline
  & \multicolumn{5}{|c||}{ Ex.11:  $(n,k,\mu) =   (800,10,1)$  }& \multicolumn{5}{|c|}{ Ex.12:  $(n,k,\mu) =   (1000,10,1)  $  }\\
  \hline
  \textbf{Trust-Reg}    &21.23	&2.3096	&5.50e-04	&35.7086	&3.21e-15			&21.78	&2.9471	&5.67e-04	&35.7086	&3.40e-15\\
  \textbf{Steep-Dest}   &158.52	&2.6007	&3.71e-04	&35.7086	&2.78e-15			&157.96	&2.7528	&4.17e-04	&35.7086	&3.48e-15\\
  \textbf{Conj-Grad	}   &47.52	&0.5641	&8.14e-04	&35.7086	&3.77e-15			&47.53	&0.6426	&8.27e-04	&35.7086	&3.94e-15\\				
  \textbf{Grad-retrac}  &67.13	&0.5777	&7.55e-05	&35.7086	&3.41e-14			&68.16	&0.7171	&7.52e-05	&35.7086	&2.94e-14\\
  \hline
\end{tabular}}
\end{table}

\begin{table}
\centering
\caption{\small{Numerical results for Example 6.4}}
\label{tab:9}
\resizebox{12cm}{!}{\begin{tabular}{|l|c|c|c|c|c||c|c|c|c|c|}
  \hline
  & \multicolumn{5}{|c||}{ Ex.13:  $(n,k,\mu) =   (100,20,0.0001) $  }& \multicolumn{5}{|c|}{ Ex.14:  $(n,k,\mu) =     (100,20,0.001) $  }\\
  \hline
  \hline
  \textbf{Methods} &	    \textbf{Nitr}	 &   \textbf{Time}	&\textbf{NrmG}	&\textbf{Fval}	&\textbf{Feasi}	&\textbf{Nitr}	&\textbf{Time}	&\textbf{NrmG}	&\textbf{Fval}	&\textbf{Feasi} \\
  \hline
  \textbf{Trust-Reg}    &11.27	&0.1060	&2.56e-04	&1.4484	&4.34e-15			&11.17	&0.1061	&3.09e-04	&2.2066	&4.39e-15\\
  \textbf{Steep-Dest}   &253.37	&1.3326	&3.77e-04	&1.4484	&2.83e-15			&238.17	&1.2122	&4.09e-04	&2.2066	&2.60e-15\\
  \textbf{Conj-Grad}    &48.24	&0.1554	&8.46e-04	&1.4484	&5.44e-15			&48.80	&0.1585	&8.72e-04	&2.2066	&5.37e-15\\					
  \textbf{Grad-retrac}  &74.24	&0.0776	&7.57e-05	&1.4484	&3.14e-14			&73.26	&0.0784	&7.49e-05	&2.2066	&2.56e-14\\
  \hline
  \hline
  & \multicolumn{5}{|c||}{ Ex.15:  $(n,k,\mu) =    (100,20,0.01)$  }& \multicolumn{5}{|c|}{ Ex.16:  $(n,k,\mu) =    (100,20,0.1)  $  }\\
  \hline
  \textbf{Trust-Reg}    &15.34	&0.2084	&3.87e-04	&7.8706	&4.19e-15			&21.33	&0.3239	&6.15e-04	&33.7574	&5.01e-15\\
  \textbf{Steep-Dest}   &391.23	&2.2931	&2.62e-04	&7.8706	&2.60e-15			&448.39	&3.7625	&2.43e-04	&33.7574	&2.54e-15\\
  \textbf{Conj-Grad}    &60.44	&0.2243	&8.42e-04	&7.8706	&5.62e-15			&64.84	&0.3026	&8.66e-04	&33.7574	&5.85e-15\\		
  \textbf{Grad-retrac}  &87.43	&0.1070	&7.81e-05	&7.8706	&4.09e-14			&99.48	&0.1463	&8.01e-05	&33.7574	&4.02e-14\\
  \hline
  \hline
  & \multicolumn{5}{|c||}{ Ex.17:  $(n,k,\mu) =    (100,20,1)$  }& \multicolumn{5}{|c|}{ Ex.18:  $(n,k,\mu) =    (100,20,20)  $  }\\
  \hline
  \textbf{Trust-Reg}    &25.30	&0.2863	&5.99e-04	&2.11e+02	&5.48e-15			&34.15	 &0.3929	&5.17e-04	&3.87e+03	&5.84e-15\\
  \textbf{Steep-Dest}   &555.11	&3.6971	&3.04e-04	&2.11e+02	&2.67e-15			&1797.70 &18.3231	&3.90e-04	&3.87e+03	&2.73e-15\\
  \textbf{Conj-Grad}    &81.45	&0.2726	&8.57e-04	&2.11e+02	&5.46e-15			&153.16	 &0.5131	&8.63e-04	&3.87e+03	&3.50e-15\\			
  \textbf{Grad-retrac}  &116.49	&0.1222	&1.35e-04	&2.11e+02	&2.62e-14			&160.62	 &0.1661	&2.70e-03	&3.87e+03	&2.55e-14\\
  \hline
  \hline
  & \multicolumn{5}{|c||}{ Ex.19:  $(n,k,\mu) =   (100,20,40)$  }& \multicolumn{5}{|c|}{ Ex.20:  $(n,k,\mu) =   (100,20,80)  $  }\\
  \hline
  \textbf{Trust-Reg}    &37.46	 &0.5130	 &5.90e-04	&7.72e+03	&6.14e-15			&41.35	    &0.5403	 &6.63e-04	&1.54e+04	&6.25e-15\\
  \textbf{Steep-Dest}   &2162.70 &21.7257    &3.67e-04	&7.72e+03	&2.69e-15			&3585.60	&37.5829 &3.61e-04	&1.54e+04	&2.55e-15\\
  \textbf{Conj-Grad}    &179.83	 &0.7251	 &8.63e-04	&7.72e+03	&3.17e-15			&212.05	    &0.8118	 &8.55e-04	&1.54e+04	&2.97e-15\\		
  \textbf{Grad-retrac}  &173.46	 &0.2159	 &5.60e-03	&7.72e+03	&2.77e-14			&192.18	    &0.2208	 &1.25e-02	&1.54e+04	&2.76e-14\\
  \hline
\end{tabular}}
\end{table}

\subsection{Linear eigenvalue problem}
\label{subsec:9}
Given a symmetric matrix $A\in \mathbb{R}^{n\times n}$  and let $\lambda_1\geq\ldots\geq \lambda_n$ be the eigenvalues of $A$. The
$p$-largest eigenvalue problem can be formulated as:
\begin{equation}
  \sum_{i=1}^{p}\lambda_i:= \max_{X\in \mathbb{R}^{n\times p}} Tr[X^{\top}AX] \quad s.t. \quad X^{\top}X = I \nonumber
\end{equation}

In this subsection, we compared Algorithm 2 with the \textbf{Sgmin} algorithm proposed in \cite{Edelman}\footnote{The Sgmin solver is available in http://web.mit.edu/$\sim$ripper/www/sgmin.html} and the \textbf{OptStiefel} proposed in \cite{WenYin}. In this case, $\nabla\mathcal{F}_1=\nabla\mathcal{F}_2$, so, our direction coincide with Manton's direction. In all experiments presented in this subsection, we generate the matrix $A$ as follows: $A = \overline{A}^{\top}\overline{A}$, where $\overline{A}\in \mathbb{R}^{n\times n}$ is a matrix whose elements are sampled from the standard Gaussian distribution. The tables \ref{tab:10}-\ref{tab:11} shows the average (mean) of the results achieved  for each value to compare from 100 simulations, in addition, we used 1000 by the maximum number of iterations and $\epsilon =$ 1e-5 as tolerance for each algorithm. For all methods to compare, we use the four stop criteria presented in subsection \ref{subsec:6} In this tables, \emph{Error} denotes the relative error between the objective values given by \emph{eigs} function of Matlab and the objective values obtain by each algorithm, i.e.
\begin{equation}
  Error := \frac{| \sum_{i=1}^{p} \lambda_i^{eig} - Tr[X_{est}^{\top}AX_{est}] |}{|Tr[X_{est}^{\top}AX_{est}]|} \nonumber
\end{equation}
where $\lambda_i^{eig}$ is the $i$-largest eigenvalue of $A$ calculated using the \emph{eig} function of Matlab, and $X_{est}$ denotes the estimated local optima for each algorithm.\\

The results corresponding to varying $p$ but fixed $n = 1000$ are presented in Table \ref{tab:10}, from these results we can observe that \textbf{OptStiefel} and our \textbf{Grad-retrac} are much more efficient methods that \textbf{Sgmin}. Moreover, we observe that \textbf{OptStiefel} and \textbf{Grad-retrac} show almost the same performance, we also see that when $p$ grows, our method converges faster \textbf{OptStiefel} in terms of CPU time (see Table \ref{tab:10} with $p=100$ and $p=200$). The second test compares the algorithms to a fixed value of $p$ and varying $n$, the numerical results of this test are presented in Table \ref{tab:11}, in this table we note that \textbf{OptStiefel} and \textbf{Grad-retrac} algorithms showed similar performance, while the method \textbf{Sgmin} shows poor performance.

\begin{table}[H]
\centering
\caption{\small{Eigenvalues on randomly generated dense matrices for fixed n = 1000 and varying p.}}
\label{tab:10}
\resizebox{8.5cm}{!}{\begin{tabular}{|l|c|c|c|c|c|c|}
  \hline
  \textbf{p} & \textbf{1} & \textbf{5} & \textbf{10} & \textbf{50} & \textbf{100} & \textbf{200} \\
  \hline
  \multicolumn{7}{|c|}{ \textbf{Sgmin}  }\\
  \hline
  \textbf{Nitr}  & 	37.91	  &61.19	 &77.33	    &111.39	    &124.65	    &176.82  \\
  \textbf{Time}  &	22.9206	  &54.318	 &79.3343	&173.187	&306.8699	&1128.10 \\
  \textbf{Nfe}   &	69.39	  &95.67	 &114.03	&153.91	    &169.93	    &226.69  \\
  \textbf{Fval}  &	3.97e+03  &1.94e+04	 &3.81e+04	&1.72e+05	&3.14e+05	&5.36e+05\\
  \textbf{NrmG}  &	9.67e-04  &2.40e-03	 &3.50e-03	&7.70e-03	&1.04e-02	&1.15e-02\\
  \textbf{Feasi} &	9.88e-17  &4.61e-16	 &7.20e-16	&2.55e-15	&4.33e-15	&7.74e-15\\
  \textbf{Error} &	9.63e-13  &2.09e-12	 &3.73e-12	&6.98e-12	&9.21e-12	&1.07e-11\\
  \hline
  \multicolumn{7}{|c|}{ \textbf{OptStiefel}  }\\
  \hline
  \textbf{Nitr}	 &  102.7	  &125.86	 &142.69	&185.84	    &213.33	    &439.94\\
  \textbf{Time}	 &  0.1595	  &0.4831	 &0.75	    &2.2888	    &5.4833	    &34.9373\\
  \textbf{Nfe}	 &  106.33	  &130.48	 &148.53	&194.1	    &223.61	    &462.64\\
  \textbf{Fval}	 &  3.97e+03  &1.94e+04	 &3.81e+04	&1.72e+05	&3.14e+05	&5.36e+05\\
  \textbf{NrmG}	 &  5.73e-04  &1.40e-03	 &1.80e-03	&3.90e-03	&4.10e-03	&1.12e-02\\
  \textbf{Feasi} &  2.12e-15  &7.07e-17	 &3.01e-15	&8.31e-15	&1.20e-14	&1.76e-14\\
  \textbf{Error} &  3.98e-13  &8.40e-13	 &1.23e-12	&2.03e-12	&2.47e-12	&3.36e-12\\
  \hline
  \multicolumn{7}{|c|}{ \textbf{Grad-retrac}  }\\
  \hline
  \textbf{Nitr}	 &  100.16	  &126.39	 &141.73	&180.38	    &213.31	    &257.51\\
  \textbf{Time}	 &  0.159	  &0.5138	 &0.7661	&2.413	    &5.4744	    &20.509\\
  \textbf{Nfe}	 &  102.92	  &130.26	 &146.85	&187.22	    &223.75	    &271.49\\
  \textbf{Fval}	 &  3.97e+03  &1.94e+04	 &3.81e+04	&1.72e+05	&3.14e+05	&5.36e+05\\
  \textbf{NrmG}	 &  4.95e-04  &1.50e-03	 &2.50e-03	&3.80e-03	&4.50e-03	&4.80e-03\\
  \textbf{Feasi} &  8.66e-15  &1.98e-14	 &2.07e-14	&3.37e-14	&3.78e-14	&4.66e-14\\
  \textbf{Error} &  5.04e-13  &7.46e-13	 &1.30e-12	&2.36e-12	&2.22e-12	&3.96e-12\\
  \hline
\end{tabular}}
\end{table}

\begin{table}[H]
\centering
\caption{\small{Eigenvalues on randomly generated dense matrices for fixed p = 6 and varying n.}}
\label{tab:11}
\resizebox{8.5cm}{!}{\begin{tabular}{|l|c|c|c|c|c|c|}
  \hline
  \textbf{n} & \textbf{50} & \textbf{100} & \textbf{600} & \textbf{1000} & \textbf{2000} & \textbf{3000} \\
  \hline
  \multicolumn{7}{|c|}{ \textbf{Sgmin}  }\\
  \hline
  \textbf{Nitr}	 &23.61       &28.43	 &51.8      &66.26      &86.87      &94.30  \\
  \textbf{Time}	 &0.4731	  &0.8486	 &19.4008	&63.3436	&331.3508   &773.992  \\
  \textbf{Nfe}	 &51.1	      &58.26	 &85.44	    &101.97	    &124.95     &133.37  \\
  \textbf{Fval}	 &9.14e+02	  &2.04e+03	 &1.37e+04	&2.32e+04	&4.70e+04   &7.08e+04  \\
  \textbf{NrmG}	 &6.53e-05	  &1.78e-04	 &1.50e-03	&2.70e-03	&5.60e-03   &8.50e-03  \\
  \textbf{Feasi} &4.14e-16	  &4.27e-16	 &4.74e-16	&5.22e-16	&5.71e-16   &5.92e-16  \\
  \textbf{Error} &2.13e-13	  &3.88e-13	 &2.00e-12	&2.51e-12	&4.50e-12   &5.16e-12 \\
  \hline
  \multicolumn{7}{|c|}{ \textbf{OptStiefel}  }\\
  \hline
  \textbf{Nitr}	 &63.07	      &70.33     &114.05    &135	    &169.01    &188.73  \\
  \textbf{Time}	 &0.0121	  &0.0192	 &0.1829	&0.6169	    &3.2749    &7.3311  \\
  \textbf{Nfe}	 &66.83	      &74.18	 &118.24	&139.91	    &179.01    &200.16  \\
  \textbf{Fval}	 &9.14e+02	  &2.04e+03	 &1.37e+04	&2.32e+04	&4.70e+04  &7.08e+04  \\
  \textbf{NrmG}	 &2.50e-05	  &8.13e-05	 &8.86e-04	&1.40e-03	&3.60e-03  &4.90e-03  \\
  \textbf{Feasi} &6.58e-16	  &8.05e-16	 &1.70e-15	&2.24e-15	&3.29e-15  &3.87e-15  \\
  \textbf{Error} &3.25e-14	  &6.82e-14	 &5.45e-13	&8.40e-13	&2.21e-12  &2.61e-12  \\
  \hline
  \multicolumn{7}{|c|}{ \textbf{Grad-retrac}  }\\
  \hline
  \textbf{Nitr}	 &62.02	      &69.29	 &116.77	&136.82	    &167.55   &187.77   \\
  \textbf{Time}	 &0.0171	  &0.0276	 &0.2073	&0.6494	    &3.2812   &7.3301   \\
  \textbf{Nfe}	 &63.26	      &70.53	 &119.84	&141.45	    &176.35   &197.70   \\
  \textbf{Fval}	 &9.14e+02	  &2.04e+03	 &1.37e+04	&2.32e+04	&4.70e+04 &7.08e+04   \\
  \textbf{NrmG}	 &2.43e-05	  &8.92e-05	 &8.20e-04	&1.70e-03	&3.10e-03 &4.10e-03   \\
  \textbf{Feasi} &1.77e-14	  &1.89e-14	 &1.50e-14	&1.69e-14	&1.81e-14 &2.12e-14   \\
  \textbf{Error} &2.96e-14	  &6.89e-14	 &5.21e-13	&8.06e-13	&1.59e-12 &1.94e-12   \\
  \hline
\end{tabular}}
\end{table}

\section{Conclusion}
In this article we study a feasible approach to deal with orthogonally constrained optimization problems. Our algorithm implements the non-monotone Barzilai-Borwein line search on a mixed gradient based search direction. The feasibility at each iteration is guaranteed by projecting each updated point, which is in the current tangent space, onto the Stiefel manifold, through SVD's decomposition. The mixture is controlled by the coefficients $\alpha$ and $\beta$, associated to $\nabla \mathcal{F}_1(X_k)$ and $\nabla \mathcal{F}_2(X_k)$ respectively. When $\alpha=1$ and $\beta=0$, the obtained direction coincides with Manton's direction, and the difference to these method is just the implementation of BB-line search instead of Armijo's.\\

Our \emph{Grad-retract} algorithm is numerically compared to other state-of-the art algorithms, in a variety of test problems, achieving clear advantages in many cases.

\begin{acknowledgements}
This work was supported in part by CONACYT (Mexico), Grant 258033 and the Brazilian Government, through the  Excellence Fellowship Program  CAPES/IMPA of the second author while visiting the department of Mathematics at UFPR.
\end{acknowledgements}

\end{document}